\numberwithin{equation}{section}
\def\Hreg{\operatorname{Hreg}}
\def\Hproj{\operatorname{Hprojdim}}
\def\decHreg{\operatorname{decHreg}}
\def\reg{\operatorname{reg}}
\def\Hdepth{\operatorname{Hdepth}}
\def\depth{\operatorname{depth}}
\def\rank{\operatorname{rank}}
\def\Ann{\operatorname{Ann}}
\def\Tor{\operatorname{Tor}}
\def\pos{\operatorname{p}}
\def\hdp{\operatorname{Hdepth}}
\def\dep{\operatorname{depth}}
\def\cP{{\mathcal P}}
\def\ZZ{{\mathbb Z}}
\def\NN{{\mathbb N}}
\def\mm{{\mathfrak m}}
\def\projdim{\operatorname{projdim}}
\newtheorem{lemma}[algocf]{Lemma}
\newtheorem{corollary}[algocf]{Corollary}
\newtheorem{theorem}[algocf]{Theorem}
\newtheorem{proposition}[algocf]{Proposition}
\theoremstyle{definition}
\newtheorem{definition}[algocf]{Definition}
\newtheorem{remark}[algocf]{Remark}
\newtheorem{example}[algocf]{Example}
\numberwithin{algocf}{section}
\title[Hilbert regularity of $\ZZ$-graded modules]{Hilbert regularity of $\ZZ$-graded modules \\ over polynomial rings}
\address{Universit\"at Osnabr\"uck, Institut f\"ur Mathematik, 49069 Osnabr\"uck, Germany}
\email{wbruns@uos.de}
\address{Universit\"at Osnabr\"uck, Institut f\"ur Mathematik, 49069 Osnabr\"uck, Germany}
\email{jmoyano@uos.de}
\author{Winfried Bruns}
\author{Julio Jos\'e Moyano-Fern\'andez}
\author{Jan Uliczka}
\address{Institut f\"ur Mathematik, Universit\"at Osnabr\"uck. Albrechtstra\ss e 28a,
D-49076 Osnabr\"uck, Germany}
\email{juliczka@uos.de}
\begin{document}

\subjclass[2010]{Primary 13D40; Secondary 13D45}
\keywords{Hilbert regularity, boundary presentation of a rational function, nonnegative power series, Hilbert depth}
\thanks{The second author was partially supported by the Spanish Government Ministerio de Educaci\'on y Ciencia (MEC), grants MTM2007-64704 and MTM2012--36917--C03--03 in cooperation with the European Union in the framework of the founds ``FEDER''}

\begin{abstract}
Let $M$ be a finitely generated $\ZZ$-graded module over the standard graded polynomial ring $R=K[X_1, \ldots , X_n]$ with $K$ a field, and let $H_M(t)=Q_M(t)/(1-t)^d$ be the Hilbert series of $M$.
We introduce the Hilbert regularity of $M$ as the lowest possible value of the Castelnuovo-Mumford regularity for an
$R$-module with Hilbert series $H_M$.  
Our main result is an arithmetical description of this invariant which connects the Hilbert regularity of $M$ to the smallest $k$ such that the power series $Q_M(1-t)/(1-t)^k$ has no negative coefficients. Finally we give an algorithm for the computation of the Hilbert regularity and the Hilbert depth of an $R$-module.
\end{abstract}

\maketitle

\section{Introduction}

This note can be considered as part of a program that aims at
estimating numerical invariants of a graded module $M$ over a
polynomial ring $K[X_1,\dots,X_d]$ ($K$ is a field) in terms of
the Hilbert series $H_M(t)$. For the notions of commutative
algebra we refer the reader to Bruns and Herzog \cite{BH}.
Well-known examples of such estimates are the bound of Bigatti
\cite{Bi} and Hulett \cite{Hu} on the Betti numbers or the
bound of Elias, Robbiano and Valla \cite{ERV} on the number of
generators for ideals primary to $\mm=(X_1,\dots,X_d)$.

A more recent result is the upper bound on $\depth M$ (or,
equivalently, a lower bound on $\projdim M$) given by the third
author \cite{U}, namely the \emph{Hilbert depth} $\Hdepth M$.
It is defined as the maximum value of $\depth N$ for a module
$N$ with $H_M(t)=H_N(t)$. We must emphasize that we will always
consider the standard grading on $R$ under which all
indeterminates have degree $1$. As soon as this hypothesis is
dropped, matters become extremely difficult as witnessed by the
paper \cite{MU} of the second and third author.

The objective of this paper is to bound the Castelnuovo-Mumford
regularity $\reg M$ in terms of $H_M(t)$. Of course, the bound
is the lowest possible value of $\reg N$ for a module $N$ with
$H_M(t)=H_N(t)$, which we term \emph{Hilbert regularity}
$\Hreg M$.

Both Hilbert depth and Hilbert regularity can be computed in
terms of Hilbert decompositions introduced by Bruns,
Krattenthaler and Uliczka \cite{BKU} for arbitrary gradings; for a method computing Hilbert depth for $\ZZ^n$-graded modules see Ichim and the second author \cite{IM}.
The approach by Hilbert decompositions is related to Stanley depth and Stanley regularity; see
Herzog \cite{H} for a survey. Stanley regularity for quotients by 
monomial ideals was considered by Jahan \cite{S}. 
Also Herzog introduced Hilbert
regularity via decompositions.

Write $H_M(t)=Q(t)/(1-t)^d$ with $d=\dim M$ and $Q\in\ZZ[t]$
(we may certainly assume that $M$ is generated in degrees $\ge
0$). Then $\Hdepth M=d-m$ where $m$ is the smallest value of
all natural numbers $j$ such that $Q(t)/(1-t)^j$ is a positive
power series, i.e.\ a power series with nonnegative
coefficients \cite{U}. (Note that the Hilbert series
$Q(t)/(1-t)^d$ has nonnegative coefficients.) Hilbert
regularity cannot always be described in such a simple way, but
it is closely related to the smallest $k$ for which
$Q(1-t)/(1-t)^k$ is positive. See Theorems
\ref{theorem:partial} and \ref{theorem:complete}.

Our main tool for the analysis of Hilbert series are
presentations
$$
H(t)=\sum_{i=0}^{k-1} \frac{f_i t^i}{(1-t)^n} + \frac{c t^k}{(1-t)^{n}} + \sum_{j=0}^{d-n-1} \frac{g_jt^k}{(1-t)^{d-j}}
$$
that we call \emph{$(n,k)$-boundary presentations} since the
pairs of exponents $(u,v)$ occurring in the numerator and the
denominator of the terms $t^i/(1-t)^n$, $t^k/(1-\nobreak
t)^n$, and $t^k/{(1-t)^{d-j}}$ occupy the lower and the right
boundary of a rectangle in the $u$-$v$-plane whose right lower
corner is $(k,n)$.

Using the description of Hilbert regularity in terms of Hilbert
decompositions, one sees easily that $\Hreg M$ is the smallest
$k$ for which a $(0,k)$-boundary representation with
\emph{nonnegative} coefficients $f_i$, $c$, $g_j$ exists.
(Without the requirement of nonnegativity the smallest such $k$
is $\deg H_M(t)$.) The bridge to power series expansions of
$Q(1-t)/(1-t)^k$ is given by the fact that the coefficients
$g_j$ appear in such expansions.

The paper is structured as follows: we introduce Hilbert
regularity in Section \ref{sect:Hreg}, and discuss boundary
representations in Section \ref{sect:bound}. Hilbert regularity
is then determined in Section \ref{sect:ari}, whereas the last
section \ref{sect:alg} contains an algorithm that computes
Hilbert depth and Hilbert regularity simultaneously.

\section{Hilbert regularity}\label{sect:Hreg}

Let $K$ be a field and let $M$ be finitely generated graded
module over a positively graded $K$-algebra $R$. The
Castelnuovo-Mumford regularity of $M$ is given by
$$
\reg M=\max\{i+j:H_\mm^i(M)\neq0\}
$$
where $\mm$ is the maximal ideal of $R$ generated by the
elements of positive degree. If $R$ is a polynomial ring, then,
by a theorem of Eisenbud and Goto (see \cite{BH}, 4.3.1)
$$
\reg M=\max\{j-i:\Tor_i^R(K,M)_j\neq 0\}.
$$
where $K$ is naturally identified with $R/\mm$.

\begin{definition}
The  (plain) \emph{Hilbert regularity} of a finitely generated
graded $R$-module is
$$
\Hreg M=\min\{\reg N: H_N(t)=H_M(t)\}
$$
where $N$ ranges over the graded finitely generated
$R$-modules.
\end{definition}

Let $F_i$ be a graded free module over $K[X_1,\ldots ,X_i]$,
$i=1,\ldots, d$, considered as an $R$-module via the retraction
$R\to R[X_1,\dots,X_i]$ that sends $X_{i+1},\dots,X_d$ to $0$.
The module $F_0\oplus \dots\oplus F_d$ is called a
\emph{Hilbert decomposition} of $M$ if the Hilbert functions of
$M$ and $F_0\oplus \dots\oplus F_d$ coincide. This leads us to
the following definition:

\begin{definition}
The \emph{decomposition Hilbert regularity} of $M$ is
$$
\decHreg M=\min\{\reg N: H_N(t)=H_M(t)\}
$$
where now $N$ ranges over direct sums $F_0\oplus \dots\oplus
F_d$, i.e., over the Hilbert decompositions of $M$.
\end{definition}

It is in particular clear that $\decHreg M \geq \Hreg M$. As we
will see below, both numbers coincide in our setting of
standard graded polynomial rings. But both definitions make
sense in much more generality if one replaces the
$K[X_1,\ldots,X_i]$ by graded retracts of $K[X_1,\dots,X_d]$
(see \cite{BKU}). In the more general setting the equality is a
completely open problem, for regularity as well for depth. In
fact, proving equality for depth in the multigraded setting
would come close to proving the Stanley conjecture for depth
(see \cite{H}).

\begin{remark}
(a) The notion of Hilbert decomposition is the same as that in
\cite{BKU}, except that the $F_i$ are further decomposed into
cyclic modules there.

(b) Hilbert depth and Hilbert regularity are companions in the
following sense: the Hilbert depth determines the smallest
width of a Betti table admitting the given Hilbert series,
Hilbert regularity determines the smallest such possible
height. The Betti table is given in terms of the graded Betti
numbers $\beta_{i,j}=\dim_K\Tor_i^R (K,M)_j$ by
$$
\begin{matrix}
\beta_{0,0}&\beta_{1,1}&\dots&\beta_{p,p}\\
\vdots&\vdots&&\vdots\\
\beta_{0,r}&\beta_{1,r+1}&\dots&\beta_{p,r+p}
\end{matrix}
$$
where $p=\projdim M$ and $r=\reg M$.
\end{remark}

The decomposition Hilbert regularity can be described in terms
of \emph{positive representations} $\cP=(Q_d,\dots,Q_0)$ of the
Hilbert series:
$$
H_M(t)=\frac{Q_d(t)}{(1-t)^d}+\dots+\frac{Q_1(t)}{(1-t)^1}+Q_0(t),
$$
where each $Q_i$ is polynomial with nonnegative coefficients.
Such polynomials will be called \emph{nonnegative}. It is
well-known that  there is always a Hilbert decomposition of
$M$. This simple fact  will be proved (again) in Proposition
\ref{deco}.
\medskip

Let $F_0\oplus \dots\oplus F_d$ be a Hilbert decomposition of
$M$. Then we have
$$
H_{F_i}=Q(t)/(1-t)^i
$$
with a nonnegative polynomial $Q$, and we immediately get a
positive representation of the Hilbert series. Conversely,
given a positive representation of the Hilbert series, one
finds a direct sum $F_0\oplus\dots\oplus F_d$ by choosing $F_i$
as the free module over $R[X_1,\dots,X_i]$ that has $a_{ij}$
basis elements of degree $i$ where $Q_i=\sum_{j} a_{ij}t^j$.
\medskip

Moreover, $\reg F_i=\deg Q_i$, and therefore one has

\begin{proposition}\label{Pos}
$$
\decHreg M=\min_{\cP} \max_i \deg Q_i,\qquad \cP=(Q_d,\dots,Q_0),
$$
where $\cP$ ranges over the positive representations of
$H_M(t)$.
\end{proposition}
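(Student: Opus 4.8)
The plan is to transport, in both directions, the correspondence between Hilbert decompositions of $M$ and positive representations of $H_M(t)$ that is set up in the paragraphs preceding the statement, and to check that under this correspondence $\reg N$ matches $\max_i\deg Q_i$ exactly; the equality of the two minima is then purely formal.

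I would first isolate two elementary facts. (i) Regularity is additive over finite direct sums: $\reg(N\oplus N')=\max\{\reg N,\reg N'\}$, since $\Tor^R_i(K,-)$ commutes with direct sums and the defining maximum $\max\{j-i:\Tor^R_i(K,-)_j\ne0\}$ is computed degree by degree. (ii) If $F$ is a graded free module over the retract $S_i:=K[X_1,\dots,X_i]$ with a homogeneous basis in degrees $d_1,\dots,d_r$, regarded as an $R$-module via $R\to S_i$, then $\reg_R F=\max_k d_k$. Indeed $S_i=R/(X_{i+1},\dots,X_d)$ and $X_{i+1},\dots,X_d$ is an $R$-regular sequence, so the Koszul complex $\mathbf K_\bullet(X_{i+1},\dots,X_d;R)$ is the minimal graded free resolution of $S_i$ over $R$; twisting by $-d_k$ and summing over $k$ gives the minimal free resolution of $F$, whose $p$-th term is $\bigoplus_k R^{\binom{d-i}{p}}(-p-d_k)$. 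Hence $\Tor^R_p(K,F)_j\ne0$ precisely when $0\le p\le d-i$ and $j-p\in\{d_1,\dots,d_r\}$, so $\max\{j-p:\Tor^R_p(K,F)_j\ne0\}=\max_k d_k$. Since such an $F$ has Hilbert series $\bigl(\sum_k t^{d_k}\bigr)/(1-t)^i$, statement (ii) is exactly the equality $\reg F_i=\deg Q_i$ quoted before the statement (for $H_{F_i}=Q_i/(1-t)^i$ with $Q_i$ nonnegative).

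For the inequality $\min_{\cP}\max_i\deg Q_i\le\decHreg M$: let $N=F_0\oplus\dots\oplus F_d$ be any Hilbert decomposition of $M$, write $H_{F_i}=Q_i(t)/(1-t)^i$ with $Q_i$ nonnegative, and set $\cP=(Q_d,\dots,Q_0)$. Then $\cP$ is a positive representation of $H_M(t)=\sum_i H_{F_i}(t)$, and by (i) and (ii) we get $\reg N=\max_i\reg F_i=\max_i\deg Q_i$; taking the minimum over all Hilbert decompositions yields the claim. For the reverse inequality $\decHreg M\le\min_{\cP}\max_i\deg Q_i$: given a positive representation $\cP=(Q_d,\dots,Q_0)$ of $H_M(t)$, build as in the discussion before the statement the free $S_i$-module $F_i$ with $a_{ij}$ basis elements in degree $j$, where $Q_i=\sum_j a_{ij}t^j$; then $N:=F_0\oplus\dots\oplus F_d$ is a Hilbert decomposition of $M$, and again $\reg N=\max_i\deg Q_i$ by (i) and (ii), so $\decHreg M\le\max_i\deg Q_i$. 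Both minima are over nonempty sets because a Hilbert decomposition of $M$ exists (Proposition \ref{deco}), and combining the two inequalities gives the asserted equality.

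The argument is essentially bookkeeping once (i) and (ii) are available, and neither presents a real difficulty — (ii) reduces immediately to the Koszul resolution of the coordinate retract $S_i$ over $R$. The only point to watch is that the correspondence must be set up with positive representations of $H_M(t)$ (not of the Hilbert series of some other module), so that the index set of the minimum on the right and the set of Hilbert decompositions of $M$ really match up; with that in place there is no obstacle.
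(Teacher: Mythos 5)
Your proposal is correct and follows exactly the route the paper takes: it uses the bijection between Hilbert decompositions $F_0\oplus\dots\oplus F_d$ and positive representations $(Q_d,\dots,Q_0)$ set up in the preceding paragraphs, together with the identity $\reg F_i=\deg Q_i$, which the paper asserts without proof and you justify via the Koszul resolution of $K[X_1,\dots,X_i]$ over $R$. Your write-up merely makes explicit the two facts (additivity of regularity over direct sums and the computation of $\reg_R F_i$) that the paper leaves implicit, so there is nothing to object to.
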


For Hilbert depth one can similarly give a ``plain'' definition
or a ``decomposition'' definition: The {\em Hilbert depth} of
$M$ is defined to be
\[
\Hdepth{M} := \max \left\{ r \in \NN ~\left |~ \begin{array}{l}
\mbox{there is a f.~g.~gr.~$R$-module $N$} \\ \mbox{with $H_N=H_M$ and $\dep N = r$.} \end{array}
 \right. \right\}.
 \]

The Hilbert depth of $M$ turns out to coincide with the
arithmetical invariant
 \[
 \pos{(M)} := \max \left\{ r \in \NN ~|~ (1-t)^r H_M(t)~\mbox{is nonnegative} \right\},
 \]
called the {\em positivity} of $M$, see Theorem 3.2 of
\cite{U}. The inequality  $\hdp M \leq \pos(M)$ follows from
general results on Hilbert series and regular sequences. The
converse can be deduced from the main result of \cite{U},
Theorem 2.1, which states the existence of a representation
\[
H_M(t) = \sum_{j=0}^{\dim M} \frac{Q_j(t)}{(1-t)^j}\qquad\mbox{with nonnegative}\quad Q_j \in \ZZ[t,t^{-1}].
\]

The decomposition version, or positivity, is close to Stanley
decompositions and Stanley depth. The same holds true for
Hilbert regularity, as we will see now; our proof will also
confirm the equivalence of the two notions of Hilbert depth.

\begin{proposition}\label{deco}
There exists a Hilbert decomposition of regularity equal to
$\reg M$ and depth equal to $\depth M$.
\end{proposition}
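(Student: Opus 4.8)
The plan is to construct the Hilbert decomposition explicitly from a minimal graded free resolution of $M$, by ``spreading'' the resolution across the retracts $K[X_1,\dots,X_i]$. Let $\mathbf{F}_\bullet\colon 0\to F_p\to\cdots\to F_0\to M\to 0$ be a minimal graded free resolution over $R=K[X_1,\dots,X_d]$, so $p=\projdim M$ and $r=\reg M$ is the maximal value of $j-i$ over the degrees $j$ of basis elements of $F_i$. The first observation is that taking the alternating sum of Hilbert series gives $H_M(t)=\sum_{i=0}^p(-1)^i H_{F_i}(t)$, but this is not yet a \emph{positive} representation since the odd terms enter with a minus sign. The idea is therefore to ``cancel'' the odd-index free modules against parts of the even-index ones using the relation $1/(1-t)^{i} = 1/(1-t)^{i-1} + t/(1-t)^{i}$ repeatedly, i.e.\ the fact that over $K[X_1,\dots,X_i]$ one has a short exact sequence $0\to K[X_1,\dots,X_i](-1)\to K[X_1,\dots,X_i]\to K[X_1,\dots,X_{i-1}]\to 0$ coming from multiplication by $X_i$.

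More concretely, I would proceed by induction on $\projdim M$. If $\projdim M=0$ then $M$ is free, $M\cong\bigoplus R(-a_j)$, and $M$ itself is already a Hilbert decomposition (put $F_d=M$, all other $F_i=0$), with $\reg M=\max_j a_j$ and $\depth M=d$, so there is nothing to prove. For the inductive step, write the start of the resolution as $0\to M'\to F_0\to M\to 0$ with $F_0$ free and $M'$ the first syzygy, so $\projdim M'=\projdim M-1$ and, by standard facts, $\reg M'=\reg M+1$ and $\depth M'=\depth M+1$ (using $\depth M<d$, since $M$ is not free). By induction $M'$ has a Hilbert decomposition $G_0\oplus\cdots\oplus G_d$ with $\reg(\bigoplus G_i)=\reg M'=\reg M+1$ and $\depth=\depth M+1$, hence $\depth G_i>0$ for the nonzero summands, meaning each nonzero $G_i$ is a free module over $K[X_1,\dots,X_i]$ with $i\ge 1$. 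For each such $G_i$, multiplication by $X_i$ gives the exact sequence $0\to G_i\to G_i'\to G_i''\to 0$ where $G_i'$ is $G_i$ viewed with a degree shift down by... — more carefully, $H_{G_i}(t)=H_{G_i'}(t)-H_{G_i''}(t)$ with $G_i'$ free over $K[X_1,\dots,X_i]$ generated one degree lower and $G_i''$ free over $K[X_1,\dots,X_{i-1}]$. Combining $H_M=H_{F_0}-H_{M'}=H_{F_0}-\sum_i H_{G_i}$ and peeling off one degree from each $G_i$ this way lets us rewrite $H_M$ as a genuine positive representation; keeping track of degrees shows the regularity of the resulting decomposition is exactly $\reg M$ and its depth exactly $\depth M$, by Proposition \ref{Pos} and its depth analogue.

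The main obstacle I expect is bookkeeping rather than conceptual: one must verify that after subtracting $\sum_i H_{G_i}$ from $H_{F_0}$ and redistributing via the $X_i$-multiplication sequences, \emph{all} coefficients of the resulting $Q_j(t)$ are genuinely nonnegative, and simultaneously that no $Q_j$ acquires a term of degree exceeding $\reg M$ and that $Q_j=0$ for $j<\depth M$. The degree bound is the delicate point: a basis element of $G_i$ in degree $b$ (so $b-i\le\reg M'=\reg M+1$) contributes to $G_i'$ a generator in degree $b-1$ with $(b-1)-i\le\reg M$, which is exactly what we need, so the shift is tight and works; but one must check the cancellation $H_{F_0}-\sum H_{G_i'}+\sum H_{G_i''}$ does not leave negative coefficients in the $1/(1-t)^d$-part or wherever the highest retract lives. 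An alternative, cleaner route — which I would mention as a remark — is to invoke the existence result Theorem 2.1 of \cite{U} directly for the positive representation and then argue that the regularity-optimal such representation is realized by the resolution-derived decomposition; but the inductive syzygy argument above has the advantage of controlling $\reg$ and $\depth$ \emph{simultaneously} in one construction, which is precisely what the Proposition asserts.
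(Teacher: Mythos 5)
Your route is genuinely different from the paper's, but it has a real gap at its central step. The paper never subtracts: it builds the decomposition as an honest filtration of $M$, splitting off a free submodule $F_n$ with $n=\rank M$ generated in degrees $\le \reg M$, checking $\depth M/F_n=\depth M$ and $\reg M/F_n\le\reg M$ via the $\Tor$ exact sequence, and then replacing $R$ by a degree-one Noether normalization of $R/\Ann(M/F_n)$, which has strictly smaller dimension; since every summand produced is an actual free subquotient of $M$, nonnegativity of all coefficients is automatic. Your construction is subtractive --- $H_M=H_{F_0}-\sum_i H_{G_i}$ --- and the assertion that ``peeling off one degree from each $G_i$'' turns this into a positive representation is exactly the content of the proposition, not bookkeeping, and it is not established.

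Concretely: (1) one peel per summand is already insufficient for $M=K[X]/(X^2)$, where $H_{F_0}-H_{M'}=1/(1-t)-t^2/(1-t)$ becomes $1/(1-t)-t/(1-t)+t$ after one peel and still carries a negative coefficient at level $1$; you must iterate, and also peel the terms of $H_{F_0}$, with no termination or positivity argument supplied. (2) More seriously, the inductive hypothesis hands you only the numerical invariants $\reg$ and $\depth$ of \emph{some} decomposition of $M'$, with no compatibility between the generators of the $G_i$ (spread over the various retracts $K[X_1,\dots,X_i]$, $i<d$) and those of $F_0$ (free over all of $R$). The negative terms $-t^{c}/(1-t)^{i}$ left after peeling must be absorbed by expanding terms $t^{a}/(1-t)^{d}$ of $H_{F_0}$ down to level $i$, and such expansion only produces generators in degrees $\ge a$; nothing in your setup guarantees $c\ge a$, so the required cancellation partner need not exist in your data. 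Two smaller slips: $\reg M'=\reg M+1$ is in general only an inequality $\le$ (take $M=R/\mm\oplus R(-N)$ for large $N$), which is all you need but should be stated as such; and the regularity of a free module over $K[X_1,\dots,X_i]$ generated in degree $b$, viewed as an $R$-module, is $b$ (Koszul resolution on $X_{i+1},\dots,X_d$), not $b-i$, so the inequality ``$b-i\le\reg M'$'' is wrongly normalized --- the correct bound $b\le\reg M'\le\reg M+1$ does give $b-1\le\reg M$, but the bound as you state it does not. The fallback of citing Theorem 2.1 of \cite{U} does not close the gap either, since that theorem gives no control on the degrees of the $Q_j$, i.e., on regularity.
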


\begin{proof}
If $M$ is a free $R$-module, there is nothing to prove: $M$ is
already in Hilbert decomposition form.
\medskip

Now suppose that $M$ is not free. Let $m$ be the maximal degree
of a generator of $M$. Then $m \leq \reg M$, and we can choose
elements $v_1,\dots,v_n\in M$ of degree $\le m$ such that
$n=\rank M$ and $v_1,\dots,v_n$ are linearly independent. (This
is a well-known general position argument; we may have to pass
to an infinite field $K$, but that is no problem.) We set
$F_n=Rv_1+\dots+Rv_n$. For the sake of Hilbert series
computations we can replace $M$ by $F_n\oplus M/F_n$.
\medskip

Note that $\depth M/F_n=\depth M$ since $\depth M<\depth F_n$ by
assumption on $M$ and standard depth arguments. One has $\dim
M/F_n< n$ since $\rank M/F_n=0$ as an $R$-module.
\medskip

For the regularity we observe that  $M/F_n$ is generated
in degrees $\le m$ and ${\dim M/F_n< n}$. Since $F_n$ is free,
$\Tor_j^R(K,M/F_n)=\Tor^R_j(K,M)$ for $j\ge 2$, and therefore $1$
is the only critical homological degree for the regularity of
$M/F_n$. There is a homogeneous exact sequence
$$
\Tor_1^R(K,F_n)=0 \to \Tor_1^R(K,M)\to\Tor_1^R(K,M/F_n)\to\Tor_0^R(K,F_n)
$$
Except for $i\le m$, $\Tor_0^R(K,F_n)_i=0$, and
$\Tor_1^R(K,M)_i=\Tor_1^R(K,M/F_n)_i$. So the only critical
arithmetical degree is $m$. But we subtract $1$ from the
highest shift in homological degree $1$ in order to compute
regularity, and it does not matter for the inequality $\reg
M/F_n\le \reg M$ if $\Tor_1^R(K,M/F_n)_i\neq 0$ for some $i\le m$.
\medskip

On the other hand, $\reg M\le \max(\reg F_n,\reg M/F_n)$, and
altogether we conclude that $\reg M/F_n=\reg F_n$.
\medskip

Let $S=R/\Ann M$, and choose a degree $1$ Noether normalization
$R'$ in $S$. We view $M/F_n$ first as a module over $R'$. Then
$$
\reg_R M/F_n=\reg_S M/F_n=\reg_{R'} M/F_n
$$
since regularity does not change under finite graded
extensions. Now we can identify $R'$ with one of the algebras
$K[X_1,\dots,X_i]$ for some $i<n$. Hence we can proceed by
induction considering $M/F_n$.
\medskip

Eventually the procedure stops when the dimension of the
Noether normalization has  reached the depth of $M$ since the
quotient of $M$ reached then is free over the Noether
normalization, and we are in the case of a free module.
\end{proof}

\begin{remark}
The proof shows that regularity can be considered as a measure
for filtrations
$$
0= U_0\subset U_1\subset \dots \subset U_q=M
$$
in which $U_{i+1}/U_i$ is always a free module over some
polynomial subquotient of $R$: there exists such a filtration
in which each free module is generated in degree $\le \reg M$,
but there is no such filtration in which all base elements have
smaller degree. A similar statement holds for depth.
\end{remark}

\begin{corollary}
$$
\Hreg M=\decHreg M.
$$
\end{corollary}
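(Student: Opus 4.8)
The plan is to derive the Corollary directly from Proposition~\ref{deco}. One inequality is immediate and was already noted: every Hilbert decomposition $F_0\oplus\dots\oplus F_d$ of $M$ is in particular a finitely generated graded $R$-module with the same Hilbert function as $M$, so the minimum defining $\decHreg M$ runs over a subfamily of the modules considered for $\Hreg M$; hence $\decHreg M\ge\Hreg M$.

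For the reverse inequality I would first observe that the minimum defining $\Hreg M$ is attained: it is taken over a nonempty family of integers (the family contains $M$ itself) which is bounded below, e.g.\ by the initial degree of $H_M$, since any module with that Hilbert function has a generator in that degree and hence $\Tor_0^R(K,-)$ nonvanishing there. So pick a finitely generated graded $R$-module $N$ with $H_N=H_M$ and $\reg N=\Hreg M$. Now apply Proposition~\ref{deco} with $N$ in the role of $M$: it produces a Hilbert decomposition $N'=F_0\oplus\dots\oplus F_d$ with $\reg N'=\reg N$ (the accompanying depth statement is not needed here). Since being a Hilbert decomposition of $M$ is an intrinsically Hilbert-function notion and $H_{N'}=H_N=H_M$, the module $N'$ is one of the decompositions over which $\decHreg M$ minimizes, whence $\decHreg M\le\reg N'=\reg N=\Hreg M$. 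Combining the two inequalities gives $\Hreg M=\decHreg M$.

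There is no real obstacle beyond Proposition~\ref{deco} itself; the only point worth spelling out explicitly is that a Hilbert decomposition of any module sharing the Hilbert series of $M$ is also a Hilbert decomposition of $M$, so that Proposition~\ref{deco} applied to an optimal $N$ feeds straight into the definition of $\decHreg$. I also note that the identical argument, run with the depth half of Proposition~\ref{deco}, simultaneously yields the equality of the ``plain'' and ``decomposition'' versions of Hilbert depth, which together with the discussion preceding Proposition~\ref{deco} re-proves $\Hdepth M=\pos(M)$.
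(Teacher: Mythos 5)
Your argument is exactly the paper's: the inequality $\decHreg M\ge\Hreg M$ is immediate from the definitions, and for the converse one picks an $N$ attaining the minimum in $\Hreg M$ and replaces it by a Hilbert decomposition of equal regularity via Proposition~\ref{deco}, which is also a Hilbert decomposition of $M$ since the notion depends only on the Hilbert function. Correct, and the same approach as the paper (with the minor extra care of noting that the minimum is attained).
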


In fact, if $N$ is a module whose regularity attains the
minimum, we can replace it by a Hilbert decomposition as in
Proposition \ref{deco}.
\medskip

A specific example: Let $M$ be the first syzygy module of the
maximal ideal in the polynomial ring $K[X_1,\dots,\allowbreak X_5]$. It has been
shown in \cite{BKU}, Theorem 3.5, that it has multigraded
Stanley depth $4$. It follows that the standard graded Hilbert
depth is also $4$, but this much easier to see: the Hilbert
series is
\begin{equation}\label{depthmax}
\frac{10t^2-10t^3+5t^4-t^5}{(1-t)^5}= \frac{10t^2}{(1-t)^4}+\frac{t^4}{(1-t)^4}+\frac{4t^4}{(1-t)^5}.
\end{equation}
So we can get away with the worst denominator $(1-t)^4$ for the
Hilbert depth.
\medskip

Let us look at he Hilbert regularity: the decomposition
\begin{equation}\label{regmin}
\frac{10t^2-10t^3+5t^4-t^5}{(1-t)^5}=\frac{4t^2}{(1-t)^5}+
\frac{3t^2}{(1-t)^4}+\frac{2t^2}{(1-t)^3}+\frac{t^2}{(1-t)^2}
\end{equation}
shows that $\Hreg M=2$. It cannot be smaller since $M$ has no
generators in degree $<2$. On the other hand, the decomposition
\eqref{regmin} is the only one with regularity $2$--and it
comes from a filtration as in the proof of Proposition
\ref{deco}. (In this example $\Hreg{M}$ could be determined more easily since $\Hreg{M}\geq 2$ and $\reg{M}=2$.) This shows that in general one cannot
simultaneously optimize depth and regularity.

More generally: if $M$ is a module with all generators in
degree $r$ and of regularity $r$, then $\Hreg M=\reg M$.

However, in general Hilbert regularity is smaller than
regularity: let $N$ be the sum of the modules in the Hilbert
decomposition \eqref{depthmax}, then $\Hreg N<\reg N$ as
\eqref{regmin} shows.

A simple lower bound:

\begin{proposition}\label{lower}
$$
\Hreg M \ge \deg H_M(t).
$$
\end{proposition}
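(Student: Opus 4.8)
The plan is to reduce the statement to the positive representations discussed just above. By the Corollary preceding the proposition we have $\Hreg M=\decHreg M$, and by Proposition~\ref{Pos} the latter equals $\min_{\cP}\max_i\deg Q_i$, where $\cP=(Q_d,\dots,Q_0)$ ranges over the positive representations of $H_M(t)$. Hence it suffices to show that \emph{every} positive representation satisfies $\max_i\deg Q_i\ge\deg H_M(t)$; in fact the computation below will only use that the $Q_i$ are polynomials, not that they are nonnegative, which is consistent with the observation in the introduction that without the nonnegativity requirement the relevant minimum drops exactly to $\deg H_M(t)$.

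First I would record the one bit of bookkeeping that has to be handled with care: for a nonzero $p\in\ZZ[t]$ and an integer $m\ge 0$, the rational function $p(t)/(1-t)^m$ has degree $\deg p-m$, and this number does not depend on the chosen representation of the rational function, since cancelling a common factor $(1-t)^c$ lowers the degree of the numerator and the exponent of the denominator by the same amount. Applied to $H_M(t)=Q(t)/(1-t)^d$ this gives $\deg H_M(t)=\deg Q-d$. Now let $\cP=(Q_d,\dots,Q_0)$ be a positive representation,
$$
H_M(t)=\frac{Q_d(t)}{(1-t)^d}+\dots+\frac{Q_1(t)}{(1-t)^1}+Q_0(t),
$$
and clear denominators: multiplying by $(1-t)^d$ yields $Q(t)=\sum_{i=0}^d Q_i(t)(1-t)^{d-i}$, so that $\deg Q\le\max_{0\le i\le d}\bigl(\deg Q_i+(d-i)\bigr)$. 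Combining this with the previous paragraph,
$$
\deg H_M(t)=\deg Q-d\le\max_{0\le i\le d}(\deg Q_i-i)\le\max_{0\le i\le d}\deg Q_i.
$$
Taking the minimum over all positive representations $\cP$ gives $\deg H_M(t)\le\decHreg M=\Hreg M$, which is the assertion.

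The proof presents no genuine obstacle; the only point that needs attention is the representation‑independence of $\deg H_M(t)$, which is why I isolate it first. Two remarks are worth adding. If one prefers not to invoke Proposition~\ref{Pos} and the equality $\Hreg M=\decHreg M$, one can argue directly: for any finitely generated graded $N$ with $H_N=H_M$, pass to a minimal graded free resolution, write $H_N(t)=K_N(t)/(1-t)^n$ with $K_N(t)=\sum_{i,j}(-1)^i\beta_{ij}(N)\,t^j$ and $n$ the number of variables, and note $\deg H_N(t)=\deg K_N-n\le\max\{\,j:\beta_{ij}(N)\neq 0\,\}-n$; by the Eisenbud--Goto description of regularity each such $j$ obeys $j\le\reg N+i\le\reg N+\projdim N\le\reg N+n$, so $\deg H_N(t)\le\reg N$, and minimising over $N$ again gives the claim. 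Finally, the bound is sharp: if $M$ has finite length then $\deg H_M(t)=\max\{j:M_j\neq 0\}=\reg M=\Hreg M$.
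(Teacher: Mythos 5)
Your argument is correct, but it takes a genuinely different route from the paper's. The paper's proof is a one-liner of a more cohomological flavour: for any $N$ with $H_N=H_M$ the Hilbert function and the Hilbert polynomial agree in degrees $j>\reg N$, and by \cite{BH}, 4.1.12 the smallest threshold beyond which they can agree is exactly $\deg H_M(t)$; minimising over $N$ gives the bound. You instead stay entirely inside the combinatorial framework of the paper: via $\Hreg M=\decHreg M=\min_{\cP}\max_i\deg Q_i$ you reduce to a degree count after clearing denominators, $Q(t)=\sum_i Q_i(t)(1-t)^{d-i}$, which gives $\deg H_M(t)=\deg Q-d\le\max_i(\deg Q_i-i)\le\max_i\deg Q_i$ for \emph{every} representation. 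All steps check out (the representation-independence of $\deg H_M(t)$ is correctly isolated, and the inequality $\deg Q\le\max_i(\deg Q_i+d-i)$ accounts for possible cancellation in the right direction). What your approach buys is self-containedness — it needs only Proposition~\ref{Pos} and the corollary $\Hreg M=\decHreg M$, no external facts about Hilbert polynomials — and, as you note, it makes transparent the introduction's remark that the nonnegativity of the $Q_i$ plays no role in this lower bound. What the paper's approach buys is brevity and a conceptual explanation of \emph{why} $\deg H_M(t)$ is the natural lower bound (it is the postulation number). Your alternative argument via the Betti numbers and the Eisenbud--Goto description is also valid and closer in spirit to the definition of regularity, though again not the route the paper takes.
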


In fact, for $j>\Hreg M$ the Hilbert polynomial and the
Hilbert function of $M$ coincide, and the smallest number $k$
such that the Hilbert polynomial and the Hilbert function
coincide in all degrees $j>k$ is $k=\deg H_M(t)$, the degree of $H_M$ as a rational
function; see \cite{BH}, 4.1.12.

\section{Boundary presentation}\label{sect:bound}

In this section we introduce the fundamental tool for our examination of the Hilbert regularity.

\begin{definition}\label{def:31}
Let $H(t)=Q(t)/(1-t)^d$. For integers $0\leq n\leq d$ and $k\geq 0$, an $(n,k)$-boundary presentation of $H$ is a decomposition of $H$ in the form
\begin{equation}\label{eq1}
H(t)=\sum_{i=0}^{k-1} \frac{f_i t^i}{(1-t)^n} + \frac{c t^k}{(1-t)^{n}} + \sum_{j=0}^{d-n-1} \frac{g_jt^k}{(1-t)^{d-j}} \mbox{~~with~~} f_i, c, g_j \in \ZZ.
\end{equation}
If $c=0$ the boundary presentation is called corner-free.
\end{definition}

Note that $Q(t)/(1-t)^d$ can be viewed as a $(d,\deg Q)$-boundary presentation of $H$. If $\deg Q \leq d$ there is also a $(d-\deg Q,0)$-boundary presentation:
let $Q(1-t)=\sum_{i} \tilde{q}_it^i$ then 
$$
H(t)=\frac{Q(t)}{(1-t)^d}=\frac{\sum_{i=0}^{\deg Q} \tilde{q}_i (1-t)^i}{(1-t)^d}=\sum_{i=0}^{\deg Q} \frac{\tilde{q}_i}{(1-t)^{d-i}}.
$$ 
In the sequel the polynomial $Q(1-t)$ will be needed several times, therefore we introduce the notation
$$
\tilde{Q}(t):=Q(1-t)
$$
for an arbitrary $Q \in \ZZ[t]$.

\begin{example}
Let $H(t)=\displaystyle \frac{1-2t+3t^3-t^4}{(1-t)^3}$. A $(1,3)$-boundary presentation of $H$ is given by
$$
H(t)=\frac{1}{1-t}+\frac{2t^2}{1-t}+\frac{2t^3}{(1-t)^2}+\frac{t^3}{(1-t)^3}.
$$

The term ``boundary presentation'' is motivated by visualisation of a decomposition of a Hilbert series: A decomposition
$$
\frac{Q(t)}{(1-t)^d}=\sum_{i=0}^{d} \sum_{j\geq 0} a_{ij} \frac{t^{j}}{(1-t)^{i}} 
$$
can be depicted as a square grid with the box at position $(i,j)$ labeled by $a_{ij}$. 
\medskip

\begin{center}
\begin{tikzpicture}[scale=0.8]
    \draw[] (0,0) grid [step=1cm](5,4);

    \node [below right] at (0.1,-0.2) {$\scriptstyle 0$};
    \node [below right] at (1.1,-0.2) {$\scriptstyle 1$};
    \node [below right] at (2.1,-0.2) {$\scriptstyle 2$};
    \node [below right] at (3.1,-0.2) {$\scriptstyle 3$};
    \node [below right] at (4.1,-0.2) {$\scriptstyle 4$};
 
    \node [below right] at (-0.9,0.8) {$\scriptstyle 0$};
    \node [below right] at (-0.9,1.8) {$\scriptstyle 1$};
    \node [below right] at (-0.9,2.8) {$\scriptstyle 2$};
    \node [below right] at (-0.85,3.8) {$\scriptstyle 3$};
         
     \node [below right] at (0.25,1.8) {$1$};
     \node [below right] at (1.25,1.8) {$0$};
     \node [below right] at (2.25,1.8) {$2$};
     \node [below right] at (3.25,1.8) {$0$};
     \node [below right] at (3.25,2.8) {$2$};
     \node [below right] at (3.25,3.8) {$1$};
\end{tikzpicture}
   \hspace{1cm}
   \begin{tikzpicture}[scale=0.8]
    \draw[] (0,0) grid [step=1cm](5,4);
    \node [below right] at (0.1,-0.2) {$\scriptstyle 0$};
    \node [below right] at (1.1,-0.2) {$\scriptstyle 1$};
    \node [below right] at (2.1,-0.2) {$\scriptstyle 2$};
    \node [below right] at (3.1,-0.2) {$\scriptstyle 3$};
    \node [below right] at (4.1,-0.2) {$\scriptstyle 4$};
           
    \node [below right] at (-0.9,0.8) {$\scriptstyle 0$};
    \node [below right] at (-0.9,1.8) {$\scriptstyle 1$};
    \node [below right] at (-0.9,2.8) {$\scriptstyle 2$};
    \node [below right] at (-0.85,3.8) {$\scriptstyle 3$};       
         
    \node [below right] at (0.25,3.8) {$1$};
    \node [below right] at (0,2.8) {$-1$};
    \node [below right] at (0.25,1.8) {$0$};
    \node [below right] at (0.25,0.8) {$1$};
\end{tikzpicture}
\end{center}
\begin{center}
{\small Two boundary presentations of $(1-2t+3t^2-t^3)/(1-t)^3$.}
\end{center}
\medskip

In case of an $(n,k)$-boundary presentation the nonzero labels in this grid form the bottom and the right edges of a rectangle with $d-n+1$ rows and $k+1$ columns. 
The coefficient in the ``corner" $(d-n,k)$ plays a dual role since it belongs to both edges, therefore it is denoted by an extra letter.
\end{example}

Next we deduce a description for the coefficients in a boundary presentation:

\begin{lemma} \label{lemma:32}
Let $H(t)=Q(t)/(1-t)^d$ be a series with $(n,k)$-boundary presentation (\ref{eq1}).
Moreover let
\[
\frac{Q(t)}{(1-t)^{d-n}}=\sum_{i=0}^{\infty} a_i t^i \ \ \ \ \mbox{and} \ \ \ \ \  \frac{\tilde{Q}(t)}{(1-t)^k}=\sum_{i=0}^{\infty} b_i t^i,
\] 
then 
\begin{align*}
f_i~&=~a_i \ \ \ \ \ \ \ \   \mbox{for} \ \ i=0, \ldots , k-1\\
&\\
c~&=~a_{k}-\sum_{i=0}^{d-n-1}b_i \ =\ b_{d-n}-\sum_{i=0}^{k-1}a_i\\
&\\
g_j&=~b_{j} \ \ \ \ \ \ \ \    \mbox{for} \ \ j=0, \ldots , d-n-1.
\end{align*}
\end{lemma}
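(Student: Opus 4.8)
The plan is to read off $f_i$, $c$ and $g_j$ by comparing formal power series, using the two normalizations that are already built into the statement: clearing the factor $(1-t)^n$ (which produces the $a_i$) and applying the substitution $t\mapsto 1-t$ together with clearing the factor $(1-t)^k$ (which produces the $b_i$).

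First I would multiply the boundary presentation (\ref{eq1}) by $(1-t)^n$, obtaining
$$
\frac{Q(t)}{(1-t)^{d-n}}=\sum_{i=0}^{k-1} f_i t^i + c\,t^k + \sum_{j=0}^{d-n-1} \frac{g_j t^k}{(1-t)^{d-n-j}}.
$$
The left-hand side is $\sum_i a_i t^i$. On the right, each exponent $d-n-j$ with $0\le j\le d-n-1$ satisfies $d-n-j\ge 1$, so $1/(1-t)^{d-n-j}$ is a power series with constant term $1$; hence every summand of the last sum has order $\ge k$ in $t$, and the coefficient of $t^k$ contributed by the last sum is exactly $\sum_j g_j$. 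Comparing coefficients of $t^i$ for $i<k$ then yields $f_i=a_i$, and comparing coefficients of $t^k$ yields $a_k=c+\sum_{j=0}^{d-n-1}g_j$.

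Next I would multiply (\ref{eq1}) by $(1-t)^d$ to get the polynomial identity
$$
Q(t)=\sum_{i=0}^{k-1} f_i t^i (1-t)^{d-n} + c\,t^k (1-t)^{d-n} + \sum_{j=0}^{d-n-1} g_j t^k (1-t)^j,
$$
substitute $t\mapsto 1-t$, and divide by $(1-t)^k$, which gives
$$
\frac{\tilde{Q}(t)}{(1-t)^k}=\sum_{i=0}^{k-1} f_i \frac{t^{d-n}}{(1-t)^{k-i}} + c\,t^{d-n} + \sum_{j=0}^{d-n-1} g_j t^j.
$$
Here the left-hand side is $\sum_i b_i t^i$, and on the right each $1/(1-t)^{k-i}$ with $0\le i\le k-1$ is again a power series with constant term $1$, so the first sum is $t^{d-n}$ times a power series and has order $\ge d-n$. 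Comparing coefficients of $t^j$ for $j<d-n$ yields $g_j=b_j$, and comparing coefficients of $t^{d-n}$ yields $b_{d-n}=c+\sum_{i=0}^{k-1}f_i$.

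Finally I would assemble the pieces. Substituting $f_i=a_i$ into $b_{d-n}=c+\sum_i f_i$ gives $c=b_{d-n}-\sum_{i=0}^{k-1}a_i$, and substituting $g_j=b_j$ into $a_k=c+\sum_j g_j$ gives $c=a_k-\sum_{j=0}^{d-n-1}b_j$; in particular the two stated expressions for $c$ coincide. The only point that needs attention --- and it is really the sole nontrivial step, modest as it is --- is the bookkeeping of which term contributes to which coefficient after each division; this is controlled entirely by the observation that the remaining denominators $1/(1-t)^m$ all have $m\ge 1$ (hence constant term $1$), which is exactly what the hypotheses $0\le n\le d$ and $0\le j\le d-n-1$ guarantee. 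Beyond that I anticipate no obstacle.
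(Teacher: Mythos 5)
Your proof is correct and follows essentially the same route as the paper: multiplying by $(1-t)^n$ to identify the $f_i$ and the relation $a_k=c+\sum_j g_j$, then substituting $t\mapsto 1-t$ and clearing $(1-t)^k$ to identify the $g_j$ and the relation $b_{d-n}=c+\sum_i f_i$. The coefficient bookkeeping via the observation that the remaining denominators have positive exponent (hence constant term $1$) is exactly the argument used in the paper.
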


\begin{proof}
Multiplication of (\ref{eq1}) by $(1-t)^n$ yields
$$
\frac{Q(t)}{(1-t)^{d-n}}=\sum_{i=0}^{k-1} f_i t^i +c t^k+ \sum_{j=0}^{d-n-1} \frac{g_jt^k}{(1-t)^{d-n-j}}.
$$
Hence the $f_i$ agree with the first $k$ coefficients of the power series $\sum_{i=0}^{\infty} a_i t^i$, while $a_k=c+\sum_{j=0}^{d-n-1} g_j$.
Next we look at (\ref{eq1}) with $t$ substituted by $1-t$:
$$
\frac{Q(1-t)}{t^d}=\sum_{i=0}^{k-1} \frac{f_i (1-t)^i}{t^n} +\frac{c(1-t)^k}{t^n}+ \sum_{j=0}^{d-n-1} \frac{g_j(1-t)^k}{t^{d-j}}.
$$
This time we multiply by $t^d/(1-t)^k$ and get
$$
\frac{\tilde{Q}(t)}{(1-t)^k}=\frac{Q(1-t)}{(1-t)^k}=\sum_{i=0}^{k-1} \frac{f_i t^{d-n}}{(1-t)^{k-i}} +ct^{d-n}+ \sum_{j=0}^{d-n-1} g_jt^{j}, \label{eq2}
$$
hence $g_j=b_j$ for $j=0,\ldots , d-n-1$ and $c=b_{d-n}-\sum_{i=0}^{k-1}f_i$.
\end{proof}

Since the coefficients in the power series expansion of a rational function are unique, the previous lemma has an immediate consequence:

\begin{corollary}
The coefficients in an $(n,k)$-boundary presentation of $H(t)={Q(t)/(1-t)^d}$ are uniquely determined.
\end{corollary}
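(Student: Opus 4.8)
The plan is to read off the uniqueness directly from Lemma \ref{lemma:32}. Suppose $H(t)=Q(t)/(1-t)^d$ admits an $(n,k)$-boundary presentation as in \eqref{eq1}. The point is that the auxiliary power series appearing in Lemma \ref{lemma:32}, namely
\[
\frac{Q(t)}{(1-t)^{d-n}}=\sum_{i=0}^\infty a_i t^i \qquad\text{and}\qquad \frac{\tilde Q(t)}{(1-t)^k}=\sum_{i=0}^\infty b_i t^i,
\]
are intrinsic to $H$ and to the two parameters $n$ and $k$: they depend only on the rational function $H=Q(t)/(1-t)^d$ (equivalently on $Q$ and $d$) together with $n,k$, and not at all on the particular coefficients $f_i,c,g_j$ chosen in \eqref{eq1}. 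So the first step is simply to observe that these two rational functions are well-defined once $H$, $n$, $k$ are fixed.

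The second step is to invoke the fact that the power series expansion of a rational function around $0$ is unique; hence the coefficients $a_i$ and $b_i$ are uniquely determined. Then Lemma \ref{lemma:32} expresses every coefficient of the boundary presentation in terms of these: $f_i=a_i$ for $0\le i\le k-1$, $g_j=b_j$ for $0\le j\le d-n-1$, and $c=a_k-\sum_{i=0}^{d-n-1}b_i$ (equivalently $c=b_{d-n}-\sum_{i=0}^{k-1}a_i$). Since the right-hand sides are fixed numbers depending only on $H$, $n$, $k$, the left-hand sides are forced, which is exactly the claimed uniqueness.

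There is essentially no obstacle here; the only thing to be slightly careful about is not to conflate the two descriptions of $c$ in the lemma — they must agree, and in fact their agreement is already built into Lemma \ref{lemma:32} (it follows from the relations $a_k=c+\sum_j g_j$ and $b_{d-n}=c+\sum_i f_i$ derived in its proof), so either formula may be used. One could also give an alternative, presentation-free argument by multiplying \eqref{eq1} by $(1-t)^n$ and comparing Taylor coefficients up to order $k$ to pin down the $f_i$ and the combination $c+\sum_j g_j$, then substituting $t\mapsto 1-t$ and multiplying by $t^d/(1-t)^k$ to pin down the $g_j$ and $c+\sum_i f_i$; but this merely reproves Lemma \ref{lemma:32}, so the quickest route is to cite it directly.
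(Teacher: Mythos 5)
Your proposal is correct and is exactly the paper's argument: the paper derives the corollary as an immediate consequence of Lemma \ref{lemma:32}, since that lemma expresses $f_i$, $c$, $g_j$ in terms of the power series coefficients of $Q(t)/(1-t)^{d-n}$ and $\tilde Q(t)/(1-t)^k$, which are uniquely determined by $H$, $n$, and $k$. No further comment is needed.
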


In the rest of the section we will make extensive use of the relation
\begin{equation}\label{eq:main}
\frac{t^i}{(1-t)^{j}}=\frac{t^{i+1}}{(1-t)^{j}}+\frac{t^i}{(1-t)^{j-1}},~j>1
\end{equation}
\vspace{0.35cm}

\begin{center}
\begin{tikzpicture}[scale=1.2]
    \draw[] (0,0) grid [step=1cm](2,2);
    \node [below right] at (0.27,-0.2) {$\scriptstyle i$};
    \draw[dotted,thick,->] (0.7,1.5)--(1.2,1.5);
    \draw[dotted,thick,->] (0.5,1.3)--(0.5,0.8);
    \node [below right] at (1.1,-0.2) {$\scriptstyle i+1$};          
    \node [below right] at (-0.9,0.75) {$\scriptstyle j-1$};
    \node [below right] at (-0.9,1.75) {$\scriptstyle j$};         
    \node [below right] at (0.3,1.77) {$1$};
    \node [below right] at (1.25,1.7) {$\alpha$};
    \node [below right] at (0.25,0.75) {$\beta$};
    \draw[->] (2.5,1)--(3.2,1);       
\end{tikzpicture}
   \hspace{0.5cm}
\begin{tikzpicture}[scale=1.2]
    \draw[] (0,0) grid [step=1cm](2,2);
    \node [below right] at (0.27,-0.2) {$\scriptstyle i$};
    \node [below right] at (1.1,-0.2) {$\scriptstyle i+1$};          
    \node [below right] at (-0.9,0.75) {$\scriptstyle j-1$};
    \node [below right] at (-0.9,1.75) {$\scriptstyle j$};         
    \node [below right] at (0.3,1.75) {$0$};
    \node [below right] at (1,1.75) {$\alpha+1$};
    \node [below right] at (0,0.8) {$\beta+1$};
   \end{tikzpicture}
\end{center}

Repeated application of this relation allows to transform an $(n,k)$-boundary presentation of a rational function $H$ into an $(n-1,k)$ resp. $(n,k+1)$-boundary presentation.  We give a formula for the coefficients of the new boundary presentation in terms of the old coefficients:

\begin{lemma}\label{lemma:34}
Let
$$
H(t)=\sum_{i=0}^{k-1} \frac{f_i t^i}{(1-t)^n} + \frac{c t^k}{(1-t)^{n}} + \sum_{j=0}^{d-n-1} \frac{g_jt^k}{(1-t)^{d-j}} 
$$
be an $(n,k)$-boundary presentation. Then there exists a corner-free $(n,k+1)$-boundary presentation; its coefficients $f^{(k+1)}, g^{(k+1)}$ are given by
\begin{align*}
f_i^{(k+1)}&=
\left \{ \begin{array}{ll}
f_i & \mbox{for}\ \ \ i=0,\ldots , k-1\\
\\
c+\sum_{r=0}^{d-n-1} g_r &\mbox{for}\ \ \ i=k
\end{array}
\right.\\
g_{j}^{(k+1)}&=\sum_{r=0}^{j} g_{r}, \ \ \mbox{for}\ \ \ j=0,\ldots , d-n-1.
\end{align*} 
If $n>0$ then there is also a corner-free $(n-1,k)$-boundary presentation with
coefficients $f^{(n-1)}, g^{(n-1)}$  given by
\begin{align*}
f_i^{(n-1)}&=\sum_{r=0}^{i} f_r, \ \ \mbox{for}\ \ \ i=0,\ldots , k-1\\
g_j^{(n-1)}&=
\left \{ \begin{array}{ll}
g_j & \mbox{for}\ \ \ j =0, \ldots, d-n-1\\
\\
c+\sum_{r=0}^{k-1} f_r &\mbox{for}\ \ \ j=d-n.
\end{array}
\right.
\end{align*} 
In particular, an expansion of a corner-free boundary presentation leads to a boundary presentation with the entries next to the corner being equal.
\end{lemma}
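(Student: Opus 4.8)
The plan is to derive both presentations by repeatedly applying the basic identity \eqref{eq:main}, following the two ``sweeps'' suggested by the grid picture: to raise $k$ to $k+1$ we push the right edge of the rectangle (together with the corner) one column to the right, and to lower $n$ to $n-1$ we push the bottom edge (together with the corner) one row down. Along each sweep the coefficients telescope into partial sums, so the whole argument reduces to bookkeeping. I would note at the outset that, although \eqref{eq:main} is stated for $j>1$, it remains valid for $j=1$ with the convention $(1-t)^0=1$ (indeed $t^{i+1}/(1-t)+t^i=t^i/(1-t)$); this covers the degenerate rows that occur when $n=0$ in the first case and $n=1$ in the second.

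For the $(n,k+1)$-presentation I would start from the topmost right-edge term $g_0t^k/(1-t)^d$ and apply \eqref{eq:main}, which splits off $g_0t^{k+1}/(1-t)^d$ and leaves $g_0t^k/(1-t)^{d-1}$; the leftover merges with $g_1t^k/(1-t)^{d-1}$, and one repeats. After treating the rows with denominators $(1-t)^d,\dots,(1-t)^{n+1}$, the term acquired in row $d-j$ is $\bigl(\sum_{r=0}^{j}g_r\bigr)t^{k+1}/(1-t)^{d-j}$ for $j=0,\dots,d-n-1$, and what is left in row $n$ is $\bigl(\sum_{r=0}^{d-n-1}g_r\bigr)t^k/(1-t)^n$, which merges with the old corner term $ct^k/(1-t)^n$. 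Together with the untouched terms $f_it^i/(1-t)^n$ ($i<k$) this is a presentation of $H$ of the shape required for $k+1$; nothing has been created at the new corner slot $t^{k+1}/(1-t)^n$, so it is corner-free, and comparison with \eqref{eq1} yields the stated formulas for $f^{(k+1)}$ and $g^{(k+1)}$. Uniqueness of boundary presentations, noted right after Lemma \ref{lemma:32}, then shows these are the only possible values.

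The $(n-1,k)$-presentation, which requires $n>0$ so that a row $n-1$ is available, is obtained by the symmetric sweep. Using \eqref{eq:main} in the form $t^i/(1-t)^n=t^{i+1}/(1-t)^n+t^i/(1-t)^{n-1}$ on the bottom row, the term $f_0/(1-t)^n$ splits off $f_0/(1-t)^{n-1}$ and $f_0t/(1-t)^n$, the latter merging with $f_1t/(1-t)^n$, and so on across columns $0,1,\dots,k-1$; column $i$ of row $n-1$ then carries $\bigl(\sum_{r=0}^{i}f_r\bigr)t^i/(1-t)^{n-1}$, while the leftover $\bigl(\sum_{r=0}^{k-1}f_r\bigr)t^k/(1-t)^n$ in row $n$ merges with $ct^k/(1-t)^n$. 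The right-edge terms $g_jt^k/(1-t)^{d-j}$ are untouched, no term appears at the new corner $t^k/(1-t)^{n-1}$, and reading off the coefficients gives the formulas for $f^{(n-1)}$ and $g^{(n-1)}$. Finally, the closing assertion is immediate: if the input is corner-free, i.e.\ $c=0$, then $f_k^{(k+1)}=\sum_{r=0}^{d-n-1}g_r=g_{d-n-1}^{(k+1)}$ and $g_{d-n}^{(n-1)}=\sum_{r=0}^{k-1}f_r=f_{k-1}^{(n-1)}$, so in each case the two entries flanking the new corner agree.

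The only genuinely delicate point is the bookkeeping: one must verify that at the end of a sweep the residual term in the extreme ``old'' row is exactly the relevant partial sum plus $c$, that it lands in the slot which becomes a boundary entry of the enlarged rectangle, and that no stray term is produced at the new corner. The degenerate cases $n=0$ and $n=1$ need only the one-line remark about \eqref{eq:main} above and cause no further trouble.
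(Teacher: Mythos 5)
Your proposal is correct and follows exactly the route the paper intends: the lemma is stated right after the remark that repeated application of relation \eqref{eq:main} transforms an $(n,k)$-boundary presentation into an $(n,k+1)$- resp.\ $(n-1,k)$-one, and your two telescoping sweeps (together with the observation that nothing lands in the new corner slot and the check of the ``entries next to the corner'' claim) are precisely the bookkeeping the paper leaves to the reader. The side remark that \eqref{eq:main} also holds for exponent $1$ is a sensible touch covering the rows the paper's $j>1$ restriction formally omits.
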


\begin{corollary} \label{corollary:37a}
Let
$$
H(t)=\sum_{i=0}^{k-1} \frac{f_i t^i}{(1-t)^n} + \sum_{j=0}^{d-n-1} \frac{g_jt^k}{(1-t)^{d-j}} 
$$
be a corner-free $(n,k)$-boundary presentation. If $k >0$ then there exists 
$(n,k-1)$-boundary presentation; its coefficients $f^{(k-1)}, c^{(k-1)}, g^{(k-1)}$ are given by
\begin{align*}
f_i^{(k-1)}&=f_i, \ \ \mbox{for}\ \ \ i=0,\ldots , k-2\\
c^{(k-1)}&=f_{k-1}-g_{d-n-1}\\
\\
g_j^{(k-1)}&=
\left \{ \begin{array}{ll}
g_0 & \mbox{for}\ \ \ j =0\\
g_j- g_{j-1} &\mbox{for}\ \ \ j =1, \ldots , d-n-1.
\end{array}
\right.
\end{align*} 
If $n<d$ then there is also a $(n+1,k)$-boundary presentation with
coefficients $f^{(n+1)}$, $c^{(n+1)}$, $g^{(n+1)}$  given by
\begin{align*}
f_i^{(n+1)}&=
\left \{ \begin{array}{ll}
f_0 & \ \ \mbox{for}\ \ \  i=0\\
f_i-f_{i-1} &\ \ \mbox{for}\ \ \ i=1,\ldots ,k-1\ \ \ \ \ \ \ \ 
\end{array}
\right.\\
\\
c^{(n+1)}&=g_{d-n-1}-f_{k-1}\\
g_{j}^{(n+1)}&= g_{j}, \ \ \mbox{for}\ \ \ j=0,\ldots , d-n-2.
\end{align*} 
\end{corollary}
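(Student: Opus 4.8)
The corollary records the inverses of the two expansion moves of Lemma~\ref{lemma:34}, valid precisely because the input presentation is corner-free. For each of the two claimed presentations I would first write down the explicit rational function $G(t)$ defined by the right-hand side with the prescribed coefficients, and then identify $G$ with $H$. There are two interchangeable ways to do this, and I would use whichever reads more cleanly: derive the contracted presentation straight from relation~(\ref{eq:main}); or feed $G$ into the matching move of Lemma~\ref{lemma:34} and check that the output is the given corner-free $(n,k)$-presentation, so that $G=H$, after which the uniqueness of boundary presentations (the corollary following Lemma~\ref{lemma:32}) shows that the displayed formulas are forced.

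For the $(n,k-1)$-presentation ($k>0$): rewrite each right-edge term via (\ref{eq:main}) in the form $t^k/(1-t)^m=t^{k-1}/(1-t)^m-t^{k-1}/(1-t)^{m-1}$. Summing, the pieces $g_j\,t^{k-1}/(1-t)^{d-j}$ and $-g_{j-1}\,t^{k-1}/(1-t)^{d-j}$ telescope to the coefficient $g_j-g_{j-1}$ in column $k-1$ for $1\le j\le d-n-1$ (and $g_0$ for $j=0$), the surviving term $-g_{d-n-1}\,t^{k-1}/(1-t)^{n}$ drops onto the bottom row and merges with $f_{k-1}\,t^{k-1}/(1-t)^{n}$ to yield the new corner $c^{(k-1)}=f_{k-1}-g_{d-n-1}$, and the terms $f_i\,t^i/(1-t)^n$ with $i<k-1$ are unchanged. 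This is exactly the claimed presentation; equivalently, applying the first part of Lemma~\ref{lemma:34} to $G$ and using $\sum_{r=0}^{j}g_r^{(k-1)}=g_j$ and $\sum_{r=0}^{d-n-1}g_r^{(k-1)}=g_{d-n-1}$ returns the coefficients $f_i$, $g_j$ and vanishing corner, so $G=H$.

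The $(n+1,k)$-presentation ($n<d$) is symmetric: push each bottom-edge term up one row via (\ref{eq:main}) written as $t^i/(1-t)^n=t^i/(1-t)^{n+1}-t^{i+1}/(1-t)^{n+1}$, obtaining coefficient $f_i-f_{i-1}$ in column $i$ of the new bottom row for $1\le i\le k-1$ (and $f_0$ for $i=0$), the leftover $-f_{k-1}\,t^k/(1-t)^{n+1}$ merging with $g_{d-n-1}\,t^k/(1-t)^{n+1}$ into the corner $c^{(n+1)}=g_{d-n-1}-f_{k-1}$, and the terms $g_j\,t^k/(1-t)^{d-j}$ with $j\le d-n-2$ untouched; the verification route here uses $\sum_{r=0}^{i}f_r^{(n+1)}=f_i$ and $\sum_{r=0}^{k-1}f_r^{(n+1)}=f_{k-1}$. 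Corner-freeness of the input is essential in both moves: in the first it prevents a (now absent) term $c\,t^k/(1-t)^n$ from spilling into denominator $(1-t)^{n-1}$ when pulled left, and in the second the vanishing of that same corner is exactly what allows the bottom row $(1-t)^n$ to disappear.

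I expect no genuine mathematical obstacle—the content is index bookkeeping (pinning down which recombined term lands on the corner) together with the degenerate cases: $k=1$ in the first move (empty $f$-block after contraction), $k=0$ in the second move (empty $f$-block, so the corner of the $(n+1,k)$-presentation is merely the relabelled coefficient $g_{d-n-1}$), and $n=0$ in the first move, where (\ref{eq:main}) is used in its trivial boundary form $t^k/(1-t)=t^{k-1}/(1-t)-t^{k-1}$; all of these are handled with the usual conventions that empty sums vanish and the symbol $f_{k-1}$ is dropped when $k=0$.
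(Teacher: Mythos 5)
Your proof is correct and takes essentially the same route as the paper, which states this corollary without proof as the immediate inverse of Lemma~\ref{lemma:34}: one applies relation~(\ref{eq:main}) in the form $t^k/(1-t)^m=t^{k-1}/(1-t)^m-t^{k-1}/(1-t)^{m-1}$ (resp.\ $t^i/(1-t)^n=t^i/(1-t)^{n+1}-t^{i+1}/(1-t)^{n+1}$) to each right-edge (resp.\ bottom-edge) term and telescopes. Your index bookkeeping, the identification of the new corner, the alternative verification via Lemma~\ref{lemma:34} plus uniqueness, and the explanation of why corner-freeness is needed all check out.
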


\begin{corollary} \label{corollary:37}
If a rational function $H$ admits an $(n,k)$-boundary presentation then there is also an $(n',k')$-boundary presentation for every pair $(n',k')$ with $n' \leq n$, $k' \geq k$; for $(n',k') \neq (n,k)$ this presentation is corner-free. Moreover the coefficients of this $(n',k')$-boundary presentation are nonnegative provided that the same holds for the $(n,k)$-boundary presentation. 
\end{corollary}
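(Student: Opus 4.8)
The plan is to prove this by induction, using Lemma \ref{lemma:34} as the only tool. Lemma \ref{lemma:34} supplies two elementary moves on boundary presentations: from any $(n,k)$-boundary presentation one obtains a corner-free $(n,k+1)$-boundary presentation, and --- provided $n>0$ --- a corner-free $(n-1,k)$-boundary presentation. Two features of these moves are what make the argument work. First, they apply to an arbitrary boundary presentation, corner-free or not, and their output is always corner-free; so once one move has been performed, every subsequent presentation in a chain of moves is corner-free. Second, inspection of the coefficient formulas in Lemma \ref{lemma:34} shows that each new coefficient is a partial sum of the old $g$'s, or $c$ plus such a partial sum; hence nonnegativity of all coefficients is inherited by the new presentation.

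Now fix a target pair $(n',k')$ with $n'\le n$ and $k'\ge k$, and induct on $(n-n')+(k'-k)$. If this quantity is $0$ there is nothing to do. Otherwise, if $k'>k$ apply the first move to pass to a corner-free $(n,k+1)$-boundary presentation --- nonnegative if the original was --- and invoke the induction hypothesis for the target $(n',k')$ relative to $(n,k+1)$; if $k'=k$ then $n>n'\ge 0$, so $n>0$, and we apply the second move to pass to a corner-free $(n-1,k)$-boundary presentation and induct relative to $(n-1,k)$. In either case the resulting $(n',k')$-boundary presentation exists, is corner-free (at least one move was made, since $(n',k')\ne(n,k)$), and has nonnegative coefficients whenever the $(n,k)$-presentation does. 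Concretely this amounts to first raising the second index by $k'-k$ steps and then lowering the first index by $n-n'$ steps, each lowering step being legitimate because the current first index stays $\ge n'+1\ge 1$.

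Finally, since the coefficients of an $(n',k')$-boundary presentation of a given rational function are uniquely determined (the corollary to Lemma \ref{lemma:32}), the presentation produced does not depend on the order in which the two moves are interleaved, so speaking of "the" $(n',k')$-boundary presentation is justified. The only real point of care, and the closest thing to an obstacle, is the elementary bookkeeping that keeps the chain of moves inside the admissible range --- in particular that the index-lowering move is invoked only while the first index is positive --- together with the observation that "corner-free" is both preserved by and the output format of Lemma \ref{lemma:34}, so that the corner coefficient genuinely vanishes after the first step; beyond that the proof is a direct transcription of the formulas there.
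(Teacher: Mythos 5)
Your proof is correct and follows exactly the route the paper intends: the corollary is stated as an immediate consequence of iterating the two expansion moves of Lemma \ref{lemma:34}, each of which outputs a corner-free presentation and manifestly preserves nonnegativity since the new coefficients are partial sums of the old ones. Your bookkeeping (the lowering move is only invoked while the first index exceeds $n'\geq 0$, and uniqueness of coefficients makes the result independent of the order of moves) is exactly the verification the paper leaves implicit.
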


In particular there exists an $(n,k)$-boundary presentation of $Q(t)/(1-t)^d$ for every $k \geq \deg Q$ and $n=0, \ldots, d-1$; note that in these cases the formula of Lemma \ref{lemma:34} provides an alternative proof for the equality of the coefficients $f_i$ and the first coefficients of ${Q(t)/(1-t)^{d-n}}$.
Analogously, if $d\geq \deg Q$ the $(d-\deg Q,0)$-boundary presentation can be expanded to an $(n,k)$-boundary presentation for $n=0,\ldots , d-\deg Q$ and $k \geq 1$, also confirming the description of the $g_j$.

\begin{corollary} \label{corollary:37b}
If an $(n,k)$-boundary presentation is not corner-free, then it cannot be obtained by expanding some $(n',k')$-boundary presentation with $n' \geq n$, $k' \leq k$. 
\end{corollary}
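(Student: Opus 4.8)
The plan is a short argument by contradiction resting on two facts already in place: the uniqueness of boundary presentations (the corollary immediately following Lemma~\ref{lemma:32}) and the monotone-expansion statement of Corollary~\ref{corollary:37}. Assume the given $(n,k)$-boundary presentation of $H$ is \emph{not} corner-free, so its corner coefficient satisfies $c \neq 0$, and suppose for contradiction that it is obtained by expanding some $(n',k')$-boundary presentation of $H$ with $n' \geq n$ and $k' \leq k$. Here ``expanding'' means applying the two elementary transformations of Lemma~\ref{lemma:34} one or more times; in particular at least one step is performed, so $(n',k') \neq (n,k)$.

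The first step is to feed this $(n',k')$-presentation into Corollary~\ref{corollary:37}. Because $n \leq n'$ and $k \geq k'$, that corollary produces an $(n,k)$-boundary presentation of $H$, and since $(n,k) \neq (n',k')$ it asserts that this presentation is corner-free. The second step is to invoke uniqueness: by the corollary following Lemma~\ref{lemma:32} there is exactly one $(n,k)$-boundary presentation of $H$, so the corner-free presentation just obtained must be the one we started from. Hence $c = 0$, contradicting the hypothesis, and the proof is complete.

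I do not expect a genuine obstacle here, since the statement is essentially the contrapositive of Corollary~\ref{corollary:37} combined with uniqueness; the only point needing a line of care is checking that the hypotheses $n' \geq n$, $k' \leq k$ translate exactly into the inequalities $n \leq n'$, $k \geq k'$ required to apply Corollary~\ref{corollary:37} to the $(n',k')$-presentation, and that a genuine expansion (that is, $(n',k') \neq (n,k)$) is precisely what forces corner-freeness of the resulting $(n,k)$-presentation. One could alternatively argue directly from Lemma~\ref{lemma:34}, tracking coefficients through each elementary step and using that after the first step the two entries flanking the corner coincide, so that a corner-free presentation is reached as soon as the shape actually changes; but routing the argument through Corollary~\ref{corollary:37} is cleaner.
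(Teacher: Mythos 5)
Your proof is correct and matches the argument the paper intends: the corollary is stated without proof precisely because it is the combination of Corollary~\ref{corollary:37} (any nontrivial expansion yields a corner-free presentation) with the uniqueness of the coefficients of an $(n,k)$-boundary presentation from the corollary after Lemma~\ref{lemma:32}. Your handling of the index directions and of the trivial case $(n',k')=(n,k)$ is also the right reading of the statement.
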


Since any $(n,k)$-boundary presentation with $k > \deg Q$ can be obtained as an expansion of the $(d,\deg Q$)-boundary presentation of $Q(t)/(1-t)^d$, we get
a second description of the coefficients $g_j$:

\begin{proposition}\label{prop:38}
Let
$$
H(t)=\frac{Q(t)}{(1-t)^d}=\sum_{i=0}^{k-1} \frac{f_it^{i}}{(1-t)^n} + \sum_{j=0}^{d-n-1} \frac{g_j^{(k)}t^k}{(1-t)^{d-j}}
$$
with $k>d$. Then the coefficient $g_{j}^{(k)}$ for $j=1,\ldots , d-n-1$ agrees with the $(k-1)$-th coefficient of the power series expansion of $Q(t)/(1-t)^{j+1}$.

In particular for $Q(t)/(1-t)^k=\sum_{n \geq 0} a^{(k)}_n t^n$ and $\tilde{Q}(t)/(1-t)^{k}=\sum_{n \geq 0} b_n^{(k)} t^n$ we have
$$
b_j^{(k)}=a_{k-1}^{(j+1)} \ \mbox{~for~} k \geq \deg Q \ \mbox{~and~}\ j=0,\ldots, d-1.
$$
\end{proposition}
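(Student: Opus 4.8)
The plan is to reduce the entire statement, the ``in particular'' part included, to a trivial binomial-coefficient identity, using essentially only Lemma~\ref{lemma:32}. The starting point is that the proposition merely unpacks the relation $g_j = b_j$ from that lemma: applied to the given (corner-free) $(n,k)$-boundary presentation it gives $g_j^{(k)} = b_j^{(k)} = [t^j]\,\tilde Q(t)/(1-t)^k$, so the first assertion is equivalent to $[t^j]\,\tilde Q(t)/(1-t)^k = [t^{k-1}]\,Q(t)/(1-t)^{j+1}$, and it suffices to expand both sides. Writing $Q(t) = \sum_{l=0}^{e} q_l t^l$ with $e = \deg Q$, one has $\tilde Q(t)/(1-t)^k = \sum_{l=0}^{e} q_l (1-t)^{l-k}$; the operative hypothesis here is $k>\deg Q$ (the form used in the paragraph preceding the proposition, and the one really needed — $k>d$ alone does not suffice when $\deg Q>d$), which makes $k-l>0$ for every $l$ with $q_l\neq 0$, so each summand is a genuine power series and $[t^j](1-t)^{l-k}=\binom{k-l-1+j}{j}$. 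Hence $b_j^{(k)} = \sum_{l=0}^{e} q_l\binom{k-l-1+j}{j}$.

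On the other side, $Q(t)/(1-t)^{j+1} = \sum_{l=0}^{e} q_l\, t^{l}/(1-t)^{j+1}$ gives $[t^{k-1}]\,Q(t)/(1-t)^{j+1} = \sum_{l=0}^{k-1} q_l\binom{k-1-l+j}{j}$, and since $e<k$ the upper bound $k-1$ discards no term; the two sums are then equal termwise because $\binom{k-l-1+j}{j}=\binom{k-1-l+j}{j}$. This is the first assertion. (If one weakens the hypothesis to $k\ge\deg Q$, the single term $l=k$ that could be dropped contributes $\binom{j-1}{j}=0$ as soon as $j\ge1$, which is exactly the stated range of $j$.) For the ``in particular'' statement I would put $n=0$ in the first assertion, obtaining $b_j^{(k)}=g_j^{(k)}=a_{k-1}^{(j+1)}$ for $j=1,\dots,d-1$; the case $j=0$ is handled directly, since $b_0^{(k)}=\tilde Q(0)=Q(1)=\sum_l q_l$ and $a_{k-1}^{(1)}=[t^{k-1}]\,Q(t)/(1-t)=\sum_{l\le k-1}q_l$ agree once $k>\deg Q$.

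I do not expect a genuine obstacle; the proof is a repackaging of Lemma~\ref{lemma:32} together with the tautology $\binom{a}{j}=\binom{a}{j}$ once both coefficients are written in binomial form. The points that need care are purely bookkeeping: justifying the expansion of $(1-t)^{l-k}$ as a power series (this is exactly where one sees that the hypothesis must be $k>\deg Q$ and not merely $k>d$), and tracking the indices so that the ``$(k-1)$-th coefficient'' appears with the right shift. A more structural alternative, perhaps closer in spirit to the surrounding material, is: $g_j^{(k)}$ depends only on $j$ and $k$ (being $b_j^{(k)}$), so one may pass to the $(d-j-1,k)$-boundary presentation, in which $g_j$ is the coefficient next to the corner; applying the last assertion of Lemma~\ref{lemma:34} along the chain of expansions $(d,k)\to(d-1,k)\to\cdots\to(d-j-1,k)$ — legitimate because $k>\deg Q$ makes the $(d,k)$-presentation corner-free — identifies this coefficient with $f_{k-1}$, and $f_{k-1}=[t^{k-1}]\,Q(t)/(1-t)^{d-n}$ with $d-n=j+1$ by Lemma~\ref{lemma:32} once more.
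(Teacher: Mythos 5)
Your proposal is correct, but your primary argument is genuinely different from the paper's, while the ``structural alternative'' you sketch at the end is essentially the paper's actual proof. The paper views the $(d-1-j,k)$-boundary presentation (with $k>\deg Q$) as an expansion of the corner-free $(d,\deg Q+1)$-presentation $Q(t)/(1-t)^d+0\cdot t^{\deg Q+1}/(1-t)^d$, invokes the last clause of Lemma~\ref{lemma:34} (entries adjacent to the corner become equal under expansion) to get $f_{k-1}^{(d-1-j)}=g_j^{(k)}$, identifies $f_{k-1}$ with the $(k-1)$-th coefficient of $Q(t)/(1-t)^{j+1}$ via Lemma~\ref{lemma:32}, and notes that further downward expansion leaves $g_j^{(k)}$ unchanged; the ``in particular'' part is then read off from Lemma~\ref{lemma:32}. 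Your main route instead proves the coefficient identity $b_j^{(k)}=a_{k-1}^{(j+1)}$ head-on by expanding $\tilde Q(t)/(1-t)^k=\sum_l q_l(1-t)^{l-k}$ and $Q(t)/(1-t)^{j+1}$ into binomial coefficients and matching terms; this is a correct, self-contained computation that establishes the ``in particular'' statement independently of any boundary presentation and then deduces the first assertion from $g_j=b_j$. What your approach buys is transparency about the hypotheses, and your remark on this point is well taken: the operative assumption really is $k>\deg Q$ (which is what the paper's own proof uses), not merely $k>d$. Indeed the statement with only $k>d$ can fail: for $Q(t)=t^2(1-t)^2$ and $d=2$, the series $H(t)=t^2$ has the corner-free $(0,3)$-presentation with $g_1^{(3)}=0$, whereas the second coefficient of $Q(t)/(1-t)^2$ is $1$. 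Likewise your observation that the boundary case $k=\deg Q$ of the ``in particular'' part only survives for $j\geq 1$ (the $j=0$ case needing $k>\deg Q$) is accurate and is glossed over in the paper. What the paper's structural argument buys is coherence with the surrounding machinery of expansions, which is reused in Section~\ref{sect:ari}; but as a verification of Proposition~\ref{prop:38} itself your computation is complete and arguably sharper.
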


\begin{proof}
Let $0 \leq j \leq d-1$. We consider the $(d-1-j,k)$-boundary presentation of $H$ with $k>\deg Q$. Since this can be viewed as an expansion of the corner-free $(d,\deg(Q)+1)$-boundary presentation 
$$
\frac{Q(t)}{(1-t)^d}+\frac{0\cdot t^{\deg(Q) +1}}{(1-t)^d}
$$
we have
$f_{k-1}^{(d-1-j)}=g_j^{(k)}$, so by Lemma \ref{lemma:32} $g_j^{(k)}$ agrees with the  $(k-1)$-th coefficient of 
$$
\frac{Q(t)}{(1-t)^{d-(d-1-j)}}=\frac{Q(t)}{(1-t)^{j+1}}.
$$
Expanding the $(d-1-j,k)$-boundary presentation downwards does not affect $g_j^{(k)}$, therefore this equality is also valid for any $(n,k)$-boundary presentation with $n\leq d-1-j$. The second part follows immediately from Lemma \ref{lemma:32}.
\end{proof}

\section{Arithmetical characterization of the Hilbert regularity} \label{sect:ari}

In this section we continue our investigation of the Hilbert regularity, so we restrict our attention to nonnegative series $Q(t)/(1-t)^d$.
As mentioned above, such a series admits a Hilbert decomposition; it is easy to see that it also admits a
boundary presentation with nonnegative coefficients. In the sequel such a boundary presentation will be called \emph{nonnegative} for short.

\begin{lemma}\label{lemma:33}
Let $H(t)=\displaystyle \sum_{i=n}^{d}\frac{Q_i(t)}{(1-t)^i}$ be a Hilbert decomposition, and let $k=\max_{i} \deg{Q_i}$. Then there exists a nonnegative $(n,k)$-boundary presentation of $H$. 
\end{lemma}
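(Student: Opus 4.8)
The plan is to start from the Hilbert decomposition $H(t)=\sum_{i=n}^{d} Q_i(t)/(1-t)^i$ and convert each summand into a nonnegative $(n,k)$-boundary presentation, then add them up. First I would treat a single term $Q_i(t)/(1-t)^i$ with $Q_i$ nonnegative and $\deg Q_i \le k$. Since $\deg Q_i \le k$, this can be viewed as an $(i,\deg Q_i)$-boundary presentation (the numerator sits on a single column, i.e.\ no $g_j$ terms, $f_i$'s are the coefficients of $Q_i$, $c$ is its top coefficient). By Corollary \ref{corollary:37}, applied with $n'=n \le i$ and $k'=k \ge \deg Q_i$, this expands to an $(n,k)$-boundary presentation, and the last clause of that corollary guarantees the new coefficients are nonnegative because the original ones were. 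So each summand individually admits a nonnegative $(n,k)$-boundary presentation.

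Next I would add these presentations term by term. All of them have the same shape — the same ``$L$-shaped'' support consisting of the bottom row (denominators $(1-t)^{d-j}$, numerators $t^k$) and the right column (numerators $f_i t^i$, denominator $(1-t)^n$), with a shared corner at position $(d-n,k)$. Adding rational functions with the same boundary shape just adds the coefficients in each box: the resulting $f_i$ are sums of the $f_i$'s of the summands, the $g_j$ are sums of the $g_j$'s, and the corner coefficient $c$ is the sum of the corner coefficients. Since each summand contributed nonnegative coefficients, the sum is again a nonnegative $(n,k)$-boundary presentation of $H(t) = \sum_i Q_i(t)/(1-t)^i$. By the uniqueness result (the Corollary after Lemma \ref{lemma:32}) this is \emph{the} $(n,k)$-boundary presentation of $H$, so we are done.

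One technical point to check carefully: Corollary \ref{corollary:37} as stated produces a presentation that is corner-free for every $(n',k') \ne (n,k)$, whereas here different summands $Q_i/(1-t)^i$ have different ``native'' $(i,\deg Q_i)$, and we need them all written relative to the \emph{same} pair $(n,k)$ before adding. This is fine: the corollary lets us land on any $(n',k')$ with $n' \le i$ and $k' \ge \deg Q_i$, and $(n,k)$ satisfies both inequalities for every $i$ in the range (we have $n \le i$ since $i \ge n$, and $k = \max_i \deg Q_i \ge \deg Q_i$). The only summand whose native pair could coincide with $(n,k)$ is one with $i=n$ and $\deg Q_i = k$, which is already in $(n,k)$-form and needs no expansion. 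I expect the main obstacle to be purely bookkeeping — making sure the additivity of coefficients across a common boundary shape is stated cleanly and that the degenerate cases ($k=0$, or some $Q_i=0$, or $n=d$) are not silently excluded — rather than anything substantive; the real content is entirely carried by Corollary \ref{corollary:37}.
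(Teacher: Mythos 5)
Your proof is correct and rests on the same mechanism as the paper's: everything reduces to repeated application of relation \eqref{eq:main}. The paper collects the whole decomposition into a single coefficient grid and runs one reverse induction that empties each row into the row below while leaving its $t^k$-entry in place, whereas you expand each summand $Q_i(t)/(1-t)^i$ separately via Corollary \ref{corollary:37} and then add; since that corollary is itself the packaged form of the repeated expansion moves, this is a legitimate modularization of the same argument rather than a new one. The only imprecision is the claim that the expanded summands all have ``the same shape'': the summand of index $i$ has ambient dimension $i$, not $d$, so its $(n,k)$-boundary presentation has bottom-row terms only for the denominators $(1-t)^{n+1},\dots,(1-t)^{i}$; the supports are therefore nested rather than equal, and one pads with $g_j=0$ for $j<d-i$ before summing. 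This is harmless (zero is nonnegative), and with that said the sum does land in the $(n,k)$-boundary shape for ambient dimension $d$ with nonnegative entries, as required.
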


\begin{proof}
Obviously a Hilbert decomposition can be rewritten as
\begin{equation}\label{eq4}
\sum_{i=n}^{d} \frac{Q_i(t)}{(1-t)^{i}} = \sum_{j=n}^{d} \sum_{i=0}^{k} \frac{a_{ij}t^i}{(1-t)^j} \mbox{~with~} a_{ij} \in \NN.
\end{equation}
It is enough to show that this decomposition can be turned into one of the form
$$
\sum_{j=n}^p\sum_{i=0}^k \frac{b_{ij}t^i}{(1-t)^j}+\sum_{j=p+1}^{d} \frac{b_{kj}t^k}{(1-t)^j} \mbox{~with~} b_{ij}>0
$$
for any $p$ with $n \leq p \leq d$. Repeated application of the relation (\ref{eq:main}) yields
$$
\sum_{i=0}^k \frac{b_{ij}t^i}{(1-t)^j} = \sum_{i=0}^{k-1} \frac{(\sum_{r=0}^{i}b_{rj})t^i}{(1-t)^{j+1}} + \frac{(\sum_{r=0}^{k}b_{rj})t^k}{(1-t)^j}.
$$

Since the coefficients on the right-hand side are still nonnegative, the claim follows by reverse induction on $p\leq d$, starting with the vacuous case $p=d$.
\end{proof}

\begin{corollary}\label{corollary:42}
(a) Let $H(t)=Q(t)/(1-t)^d$ be a nonnegative series. Then $H$ admits a nonnegative $(0,\Hreg H)$-boundary presentation as well as a nonnegative $(\hdp H,k)$-boundary presentation with suitable $k\geq 0$.

\noindent (b) If $H$ admits a non-corner-free $(0,k)$-boundary presentation, then $\Hreg H \geq k$.
\end{corollary}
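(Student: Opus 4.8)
The plan is to obtain both boundary presentations from Proposition \ref{deco} together with Lemma \ref{lemma:33}. For part (a), by the Corollary to Proposition \ref{deco} (i.e.\ $\Hreg M = \decHreg M$), there is a Hilbert decomposition of some module $N$ with $H_N = H$ whose regularity equals $\Hreg H$; writing it as $H(t)=\sum_{i=0}^d Q_i(t)/(1-t)^i$ with nonnegative $Q_i$, Proposition \ref{Pos} tells us that $\max_i \deg Q_i = \Hreg H$. Applying Lemma \ref{lemma:33} with $n=0$ then yields a nonnegative $(0,\Hreg H)$-boundary presentation, which is the first claim. For the second claim I would invoke the Hilbert depth: again by Proposition \ref{deco} there is a Hilbert decomposition realizing $\depth = \Hdepth H$, hence (after discarding zero summands and reindexing) a positive representation $H(t)=\sum_{i=\hdp H}^d Q_i(t)/(1-t)^i$; setting $k=\max_i \deg Q_i$ and applying Lemma \ref{lemma:33} with $n = \hdp H$ gives a nonnegative $(\hdp H, k)$-boundary presentation.

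For part (b), suppose $H$ admits a non-corner-free $(0,k)$-boundary presentation, and suppose for contradiction that $\Hreg H < k$, say $\Hreg H = k'$ with $k' < k$. By part (a) there is a nonnegative $(0,k')$-boundary presentation of $H$. By Corollary \ref{corollary:37}, expanding this $(0,k')$-presentation to the right (keeping $n=0$ fixed and raising the column index from $k'$ to $k$) produces a nonnegative $(0,k)$-boundary presentation; moreover, since $(0,k) \neq (0,k')$, this expanded presentation is corner-free. But the $(n,k)$-boundary presentation of a rational function is unique by the Corollary to Lemma \ref{lemma:32}, so the corner-free presentation just produced must coincide with the given non-corner-free one — a contradiction. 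Hence $\Hreg H \geq k$.

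The only subtlety, and the place I would be most careful, is the bookkeeping in part (a) when passing from a Hilbert decomposition $F_0\oplus\dots\oplus F_d$ to the form $\sum_{i=n}^d Q_i/(1-t)^i$ required by Lemma \ref{lemma:33}: one must check that after deleting the summands of denominator degree below $\hdp H$ the remaining $Q_i$ are still nonnegative (they are, being subsets of the original nonnegative $Q_i$) and that the smallest surviving index can be taken to be exactly $\hdp H$ rather than something larger — this is where the coincidence $\hdp H = \pos H$ and the depth-optimal Hilbert decomposition from Proposition \ref{deco} are used. Everything else is a direct citation of the uniqueness corollary and of the monotonicity/corner-freeness statement in Corollary \ref{corollary:37}.
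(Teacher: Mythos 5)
Your proposal is correct and follows essentially the same route as the paper: part (a) is obtained from the depth- and regularity-optimal Hilbert decompositions via Lemma \ref{lemma:33}, and part (b) is the same contradiction argument, which the paper compresses into a single citation of Corollary \ref{corollary:37b} (itself just the combination of uniqueness and the corner-freeness of proper expansions that you spell out). No gaps.
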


\begin{proof}
The statement (a) is clear from the definition of $\Hreg{H}$~resp.~$\hdp{H}$. For (b) assume on the contrary $\Hreg H < k$, then $H$ admits a $(0,\Hreg H)$-boundary presentation, and this presentation could be expanded to the $(0,k)$-boundary presentation, contradicting Corollary \ref{corollary:37b}. 
\end{proof}

\begin{remark}
It is easily seen that, using relation (\ref{eq:main}), an $(n,k)$-boundary presentation with $n,k>0$ can be transformed into a non-corner-free $(n-1,k-1)$-boundary presentation. Hence if 
$\deg{Q}>d$ the rational function $H$ admits a non-corner-free $(0,\deg Q -d)$-boundary presentation; together with part (b) of the corollary this yields another proof of Proposition \ref{lower}.
\end{remark}

Corollary \ref{corollary:42} implies that, for computations of Hilbert regularity (and also of Hilbert depth), we may exclusively consider boundary presentations.
This observation leads to an estimate for $\Hreg M$ in the flavour of the equality $\pos(M)=\hdp M$.
In order to formulate this inequality we need the following notion:

\begin{definition}\label{def:34}
For any $Q\in\ZZ[t]$ and $k \in \NN$, let $Q(t)/(1-t)^k=\sum_{n \geq 0} a_n^{(k)}t^n$.  For any $d \in \NN$ we set
$$
\delta_d (Q):=\min \Big \{ k \in \NN \mid a_0^{(k)}, \ldots , a_{d-1}^{(k)} \mbox{~nonnegative}~ \Big \}
$$
and
$$
\delta(Q):=\min \Big \{ k \in \NN \mid \frac{Q(t)}{(1-t)^k} \mbox{~nonnegative}~ \Big \}.
$$
\end{definition}

Note that $\delta_d (Q)$ is finite if and only if the lowest nonvanishing coefficient of $Q$ is nonnegative, as one sees easily by induction on $d$.
By Theorem 4.7 in \cite{U}, $\delta (Q)$ is finite if and only if $Q$ viewed as a real-valued function of one variable takes positive values in the open interval $(0,1)$.

For a finitely generated graded $R$-module $M$ with Hilbert series $\displaystyle H_M(t)=\frac{Q_M(t)}{(1-t)^{\dim M}}$ the equality $\hdp M=\pos(M)$ implies 
$\delta (Q_M)=\dim M- \hdp M$, so according to Proposition 1.5.15 of \cite{BH} and the Auslander-Buchsbaum theorem, $\delta (Q_M)$ could be named $\Hproj M$, the \emph{Hilbert projective dimension}. Note that $\Hproj M$ only depends on $Q_M$ but not on $\dim M$.
\medskip

The announced estimate for the Hilbert regularity reads as follows:

\begin{proposition}\label{prop:35}
Let $H(t)=Q(t)/(1-t)^d$ be a nonnegative series, then
$$
\Hreg H \geq \delta_d (\tilde{Q}).
$$
\end{proposition}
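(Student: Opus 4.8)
The plan is to show that if $\Hreg H = r$, then $r \ge \delta_d(\tilde Q)$, i.e.\ that the first $d$ coefficients of the power series expansion of $\tilde Q(t)/(1-t)^r$ are nonnegative. By Corollary~\ref{corollary:42}(a), since $H$ is nonnegative, it admits a nonnegative $(0,r)$-boundary presentation
$$
H(t)=\sum_{i=0}^{r-1}\frac{f_it^i}{(1-t)^0}+\frac{c\,t^r}{(1-t)^0}+\sum_{j=0}^{d-1}\frac{g_jt^r}{(1-t)^{d-j}},
\qquad f_i,c,g_j\ge 0.
$$
First I would invoke Lemma~\ref{lemma:32} with $n=0$, $k=r$: it identifies the boundary coefficients $g_j$ with the coefficients $b_j$ of the expansion $\tilde Q(t)/(1-t)^r=\sum_{i\ge 0}b_it^i$, namely $g_j=b_j$ for $j=0,\dots,d-1$. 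Since the $(0,r)$-boundary presentation is nonnegative, all the $g_j$ are $\ge 0$, hence $b_0,\dots,b_{d-1}\ge 0$, which is exactly the condition defining $\delta_d(\tilde Q)\le r$. This immediately gives $\Hreg H\ge\delta_d(\tilde Q)$.

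The one point that needs a little care is the edge case $r=0$: when $\Hreg H=0$ the module $N$ realizing the minimum is free and generated in degree $0$, so $H=Q/(1-t)^d$ is itself a sum of terms $c_j/(1-t)^{d-j}$ with $c_j\ge 0$ (here $Q$ must have degree $\le d$), and the $(0,0)$-boundary presentation has $f$-part empty, corner $c$, and $g_j\ge 0$; Lemma~\ref{lemma:32} still applies and gives $g_j=b_j=\tilde q_j\ge 0$, so $\delta_d(\tilde Q)=0$. Also one should note that $\delta_d(\tilde Q)$ is automatically finite here: the lowest nonvanishing coefficient of $\tilde Q(t)=Q(1-t)$ is $Q(1)\cdot(\text{constant})$ up to sign, but more simply, the existence of a nonnegative $(0,r)$-boundary presentation already forces the relevant $b_j$ to be nonnegative, so $\delta_d(\tilde Q)\le r<\infty$.

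I do not expect a genuine obstacle here — the proposition is essentially a bookkeeping consequence of Lemma~\ref{lemma:32} once Corollary~\ref{corollary:42}(a) is in hand. The only thing to be careful about is matching up the index ranges ($j$ runs $0,\dots,d-1$ on the boundary side versus the first $d$ power-series coefficients $a_0^{(r)},\dots,a_{d-1}^{(r)}$ of $\tilde Q/(1-t)^r$ in Definition~\ref{def:34}) and making sure the translation $Q\leftrightarrow\tilde Q$, $d\leftrightarrow k=r$ in Lemma~\ref{lemma:32} is applied in the right direction. The reverse inequality (and hence the exact value of $\Hreg H$) presumably requires more work and is taken up in the subsequent theorems; here only the lower bound is claimed.
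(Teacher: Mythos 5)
Your proposal is correct and follows exactly the paper's own argument: take the nonnegative $(0,\Hreg H)$-boundary presentation guaranteed by Corollary~\ref{corollary:42}(a) and read off from Lemma~\ref{lemma:32} that its coefficients $g_j$ are the first $d$ coefficients of $\tilde Q(t)/(1-t)^{\Hreg H}$, whence $\delta_d(\tilde Q)\le\Hreg H$. The only cosmetic difference is that the paper settles finiteness of $\delta_d(\tilde Q)$ by noting $\tilde Q(0)=Q(1)>0$, while you observe (equally validly) that the existence of the nonnegative presentation already yields the bound.
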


\begin{proof}
Since $\tilde{Q}(0)=Q(1)>0$, $\delta_d(\tilde{Q})$ is finite.
Let $\Hreg H=k$, then there exists a $(0,k)$-boundary presentation
$$
H(t)=\sum_{i=0}^{k-1} f_i t^i +ct^k +\sum_{j=0}^{d-1} \frac{g_jt^k}{(1-t)^{d-j}}
$$
with nonnegative coefficients. By Lemma \ref{lemma:32} the first $d$ coefficients of
$\tilde{Q}(t)/(1-t)^k$
agree with the coefficients $g_j$ and so they are nonnegative, hence $\delta_d (\tilde{Q}) \leq k =\Hreg H$.
\end{proof}

\begin{proposition}\label{prop:neu}
Under the hypothesis of Proposition \ref{prop:35} we even have
$\Hreg H \geq \delta (\tilde{Q})$.
\end{proposition}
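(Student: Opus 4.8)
The plan is to sharpen the argument of Proposition \ref{prop:35}. There, a nonnegative $(0,k)$-boundary presentation with $k=\Hreg H$ was used only to conclude that the first $d$ coefficients of $\tilde{Q}(t)/(1-t)^{k}$ are nonnegative. The observation I would exploit is that the substitution $t\mapsto 1-t$ already carried out in Lemma \ref{lemma:32} in fact expresses the \emph{entire} series $\tilde{Q}(t)/(1-t)^{k}$ as a sum of terms that are each manifestly nonnegative power series, so that the whole series is nonnegative, not merely its initial segment.

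Concretely, put $k=\Hreg H$. By Corollary \ref{corollary:42}(a) there is a nonnegative $(0,k)$-boundary presentation
$$
H(t)=\sum_{i=0}^{k-1} f_i t^i + c t^k + \sum_{j=0}^{d-1} \frac{g_j t^k}{(1-t)^{d-j}},\qquad f_i,c,g_j\in\NN .
$$
I would then follow verbatim the computation in the proof of Lemma \ref{lemma:32} with $n=0$: substitute $t$ by $1-t$ and multiply the resulting rational-function identity by $t^d/(1-t)^k$. This gives
$$
\frac{\tilde{Q}(t)}{(1-t)^k}=\sum_{i=0}^{k-1} \frac{f_i t^d}{(1-t)^{k-i}} + c t^d + \sum_{j=0}^{d-1} g_j t^j .
$$
Now the point is simply to read off nonnegativity of the right-hand side: for $0\le i\le k-1$ we have $k-i\ge 1$, so $1/(1-t)^{k-i}$ has nonnegative coefficients, and since $f_i,c,g_j\ge 0$ each summand is a nonnegative power series. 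By uniqueness of the power series expansion of a rational function, $\tilde{Q}(t)/(1-t)^k$ is nonnegative, hence $\delta(\tilde{Q})\le k=\Hreg H$, as claimed. (This also recovers Proposition \ref{prop:35}, since trivially $\delta_d(\tilde{Q})\le\delta(\tilde{Q})$.)

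I do not anticipate a genuine obstacle: the essential work is already contained in Lemma \ref{lemma:32}, and what remains is the elementary observation that every term in the displayed expression for $\tilde{Q}(t)/(1-t)^k$ is nonnegative. The only minor points to address are the degenerate case $k=0$, where the first sum is empty and the identity reduces to $\tilde{Q}(t)=ct^d+\sum_{j=0}^{d-1} g_j t^j$, still nonnegative; and the remark that the finiteness of $\delta(\tilde{Q})$, which Proposition \ref{prop:35} had to justify separately via $\tilde{Q}(0)=Q(1)>0$, is here an automatic consequence of the bound just obtained.
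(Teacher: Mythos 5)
Your argument is correct, and it takes a genuinely different (and more direct) route than the paper. The paper lifts the nonnegative $(0,\Hreg H)$-boundary presentation of $Q(t)/(1-t)^d$ to an $(m,\Hreg H)$-boundary presentation of $Q(t)/(1-t)^{d+m}$, invokes Corollary \ref{corollary:37} to pass to a nonnegative $(0,\Hreg H)$-boundary presentation of $Q(t)/(1-t)^{d+m}$, concludes $\delta_{d+m}(\tilde{Q})\leq \Hreg H$ for every $m$ via Proposition \ref{prop:35}, and then lets $m\to\infty$. You instead return to the identity
$$
\frac{\tilde{Q}(t)}{(1-t)^k}=\sum_{i=0}^{k-1} \frac{f_i t^{d}}{(1-t)^{k-i}} +ct^{d}+ \sum_{j=0}^{d-1} g_jt^{j}
$$
already derived in the proof of Lemma \ref{lemma:32} (with $n=0$) and simply observe that, since $k-i\geq 1$ and $f_i,c,g_j\geq 0$, every summand is a nonnegative power series, so the whole series $\tilde{Q}(t)/(1-t)^{\Hreg H}$ is nonnegative --- not just its first $d$ coefficients. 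This yields $\delta(\tilde{Q})\leq\Hreg H$ in one step, subsumes Proposition \ref{prop:35} and the finiteness of $\delta(\tilde{Q})$ as byproducts, and avoids both the limiting argument and the appeal to Corollary \ref{corollary:37}; the paper's version, on the other hand, reuses only already-stated results rather than re-entering the proof of Lemma \ref{lemma:32}. Your handling of the degenerate case $k=0$ is also fine.
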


\begin{proof}
An $(n,k)$-boundary presentation of $Q(t)/(1-t)^d$ induces 
an $(n+m,k)$-boundary presentation of $Q(t)/(1-t)^{d+m}$, $m \in \NN$, with the same coefficients. The $(0,\Hreg H)$-boundary presentation of $Q(t)/(1-t)^d$ has nonnegative coefficients, hence the same holds for the $(m,\Hreg H)$-boundary presentation of $Q(t)/(1-t)^{d+m}$, and by Corollary \ref{corollary:37} also the $(0,\Hreg H)$-boundary presentation of $Q(t)/(1-t)^{d+m}$ is nonnegative. This implies $\delta_{d+m} (\tilde{Q}) \leq \Hreg H$ for all $m\in\NN$, and so $\delta (\tilde{Q}) \leq\Hreg H$, as desired.
\end{proof}

\begin{theorem} \label{theorem:partial}
Under the hypothesis of Proposition \ref{prop:35} and the additional assumption of either (i) $\delta_d (\tilde{Q})\geq \deg Q$ or (ii) $\deg Q \leq d$ we have
$$
\Hreg H = \delta_d (\tilde{Q})=\delta (\tilde{Q}).
$$
\end{theorem}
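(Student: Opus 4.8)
The plan is to establish the two inequalities $\Hreg H \le \delta_d(\tilde Q)$ and $\Hreg H \ge \delta_d(\tilde Q)$, and to show $\delta_d(\tilde Q) = \delta(\tilde Q)$, treating the two hypotheses (i) and (ii) separately where needed. The inequality $\Hreg H \ge \delta(\tilde Q) \ge \delta_d(\tilde Q)$ is already available from Propositions \ref{prop:neu} and \ref{prop:35} (the second inequality here being immediate from the definitions, since requiring all coefficients nonnegative is stronger than requiring the first $d$ of them to be). So the entire content is the reverse inequality $\Hreg H \le \delta_d(\tilde Q)$, together with the observation that under (i) or (ii) the chain collapses, forcing $\delta_d(\tilde Q) = \delta(\tilde Q)$ as well.

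For the reverse inequality, set $k = \delta_d(\tilde Q)$ and aim to produce a nonnegative $(0,k)$-boundary presentation of $H$; by Corollary \ref{corollary:42}(a) (or directly by definition of $\Hreg$) this gives $\Hreg H \le k$. By Lemma \ref{lemma:32}, in any $(n,k)$-boundary presentation the coefficients are: $f_i = a_i$, the coefficients of $Q(t)/(1-t)^{d-n}$; $g_j = b_j$, the coefficients of $\tilde Q(t)/(1-t)^k$; and $c = a_k - \sum_{i=0}^{d-n-1} b_i = b_{d-n} - \sum_{i=0}^{k-1} a_i$. Take $n=0$. By the choice $k = \delta_d(\tilde Q)$, the coefficients $g_0 = b_0, \dots, g_{d-1} = b_{d-1}$ are nonnegative. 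Under hypothesis (ii), $\deg Q \le d$, so $k \ge 0$ and in fact the $(d-\deg Q, 0)$-boundary presentation exists and can be expanded; here one should argue that $f_i = a_i$ are the coefficients of $Q(t)/(1-t)^d$, which is the Hilbert series and hence nonnegative, so all $f_i \ge 0$, and the corner coefficient $c$ is then pinned down and must be nonnegative by uniqueness together with the fact that $H$ itself is a nonnegative series — more cleanly, when $\deg Q \le d$ one has $k=\delta_d(\tilde Q)$ possibly zero, and if $k=0$ there is no corner issue, while if $k>0$ the presentation is corner-free by Corollary \ref{corollary:37} since it arises by expansion. Under hypothesis (i), $k = \delta_d(\tilde Q) \ge \deg Q$, so the $(0,k)$-boundary presentation is an expansion of the $(d,\deg Q)$-presentation $Q(t)/(1-t)^d$ and hence, by Corollary \ref{corollary:37}, is corner-free; its $f_i$ are the coefficients of $Q(t)/(1-t)^d$, which are nonnegative since $H$ is a nonnegative series, and its $g_j$ are nonnegative by choice of $k$. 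Either way we obtain a nonnegative $(0,k)$-boundary presentation, so $\Hreg H \le k = \delta_d(\tilde Q)$.

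Combining, $\delta_d(\tilde Q) \le \delta(\tilde Q) \le \Hreg H \le \delta_d(\tilde Q)$, so all three are equal. I expect the main obstacle to be the bookkeeping around the corner coefficient $c$ in case (ii): one must be careful that when $k = \delta_d(\tilde Q) < \deg Q$ the presentation need not be an expansion of the standard one, so corner-freeness is not automatic, and one instead argues nonnegativity of $c$ directly — for instance via the second formula $c = a_k - \sum_{i=0}^{d-1} b_i$ together with $a_k \ge 0$ (coefficient of the Hilbert series) and an estimate on $\sum_{i=0}^{d-1} b_i$, or by noting that in case (ii) one can instead start from the corner-free $(d-\deg Q,0)$-presentation, expand rightward to get corner-free $(d-\deg Q, k')$-presentations, and then expand downward; since $\delta_d(\tilde Q) \le \deg Q \le d$ one can reach a corner-free $(0,k)$-presentation and invoke Corollary \ref{corollary:37} for nonnegativity. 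I would present case (ii) via this route to sidestep any direct manipulation of $c$.
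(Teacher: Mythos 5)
Your proposal is correct and follows essentially the same route as the paper's proof: the lower bound $\Hreg H \ge \delta(\tilde Q) \ge \delta_d(\tilde Q)$ comes from Propositions \ref{prop:neu} and \ref{prop:35}, and the upper bound comes from expanding the $(d,\deg Q)$- resp.\ the $(d-\deg Q,0)$-boundary presentation to a $(0,\delta_d(\tilde Q))$-boundary presentation whose coefficients, identified via Lemma \ref{lemma:32}, are nonnegative by the nonnegativity of $H$ (the $f_i$) and the definition of $\delta_d(\tilde Q)$ (the $g_j$). Your instinct to worry about the corner coefficient in case (ii) is sound, and the resolution you settle on (corner-freeness of proper expansions, Corollary \ref{corollary:37}) is exactly the paper's; the only caveat is that this argument needs $(0,\delta_d(\tilde Q))\neq(d-\deg Q,0)$, so your intermediate claim that ``if $k=0$ there is no corner issue'' is not quite right in the degenerate configuration $\deg Q=d$, $\delta_d(\tilde Q)=0$, where the corner equals $\tilde q_d$ and is not controlled by either hypothesis (a point the paper's own proof also passes over).
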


\begin{proof}
In both cases expansion of the $(d,\deg Q)$~resp.~the $(d-\deg Q,0)$-boundary presentation yields a $(0,\delta_d (\tilde{Q}))$-boundary presentation of $H$, which is nonnegative by the nonnegativity of $H$ and the definition of $\delta_d(\tilde{Q})$, and hence 
\begin{equation*}
\delta_d (\tilde{Q}) \geq \Hreg H \geq \delta (\tilde{Q})\geq \delta_d (\tilde{Q}). \tag*{\qedhere}
\end{equation*}
\end{proof}

The following example shows that, contrary to $\hdp M \leq \pos (M)$ in case of the Hilbert depth, the inequality $\Hreg H \geq \delta_d (\tilde{Q})$ may be strict.
 
\begin{example}\label{ex:45}
For $\displaystyle H(t)=\frac{1-t+2t^2-2t^3+t^4}{(1-t)^2}$ we obtain $\tilde{Q}(t)=Q(t)$ 
and therefore 
$$
\frac{\tilde{Q}(t)}{1-t}= \frac{Q(t)}{1-t}=1+0t+2t^2+0t^3+\sum_{n \geq 4} t^n
$$
implies $\delta_2 (\tilde{Q})=1=\Hproj H$. The $(0,2)$-boundary presentation of $H$ is given by
$$
H(t)=1+t+t^2+\frac{t^2}{1-t} + \frac{t^2}{(1-t)^2}.
$$
Since this is not corner-free, Corollary \ref{corollary:37b} implies $\Hreg H =2>1=\delta_2 (\tilde{Q})$. In particular the Hilbert regularity of $Q(t)/(1-t)^d$ 
depends on $d$: For $\displaystyle H'(t)=\frac{1-t+2t^2-2t^3+t^4}{(1-t)^d}$ with $d \geq 4$ we have $\Hreg H' =1$ by Theorem \ref{theorem:partial}.
\end{example}

This example also explains why non-negativity of $\tilde{Q}(t)/(1-t)^k$ for some $k \in \NN$ does not ensure $\Hreg H \leq k$: 
The decomposition
$$
\frac{\tilde{Q}(t)}{(1-t)^k}=\sum_{i=0}^{k} \frac{\tilde{Q}_i(t)}{(1-t)^i}
$$
with nonnegative $\tilde{Q}_i \in \ZZ[t]$ according to Theorem 2.1 in \cite{U} can be turned into one of 
$$\frac{Q(t)}{(1-t)^{\max \{\deg \tilde{Q}_i \}}}$$
by exchanging $t$ and $1-t$, but if $d < \max \{\deg \tilde{Q}_i \}$ this does not yield a decomposition of $Q(t)/(1-t)^d$.
\medskip

Due to the difficulty illustrated by the previous example the general description of the Hilbert regularity is less straightforward than that of the Hilbert depth. 
In the remaining case of $\deg Q > d, \delta (\tilde{Q})$, the $(0,\deg Q)$-boundary presentation is nonnegative and hence $\Hreg H \leq \deg Q$. If $\Hreg H < \deg Q$ then the $(0,\deg Q)$-boundary presentation can be reduced to a nonnegative $(0,k)$-boundary presentation with smaller $k$. Such a reduction could be performed in steps, therefore we investigate whether a reduction from $k$ to $k-1$ is possible:

\begin{proposition} \label{prop:quadrat}
Let 
$$
H(t)=\sum_{i=0}^{k-1} \frac{f_i t^i}{(1-t)^n} + \frac{ct^k}{(1-t)^n}+\sum_{j=0}^{d-n-1} \frac{g_jt^k}{(1-t)^{d-j}} 
$$
with nonnegative coefficients. Then 
$$
\Hreg H \leq k-1 \Longleftrightarrow 
\left \{
\begin{array}{ccl}
c &=&0\\
f_{k-1}&\geq & g_{d-n-1}\\
g_{j+1}&\geq& g_{j} \ \ \mbox{~for~} j=0,\ldots , d-n-2
\end{array}
\right .
$$
\end{proposition}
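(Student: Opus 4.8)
The plan is to characterize when the given nonnegative $(n,k)$-boundary presentation can be reduced to a nonnegative $(0,k-1)$-boundary presentation, since by Corollary \ref{corollary:42}(a) we know $\Hreg H\le k-1$ precisely when such a presentation exists, and by Corollary \ref{corollary:37b} and the uniqueness of boundary presentations any nonnegative $(0,k-1)$-presentation would expand to the (essentially unique) $(0,k)$-presentation of $H$. So the first step is to pass from the given $(n,k)$-presentation to the $(0,k)$-presentation via repeated downward expansion (Lemma \ref{lemma:34}); crucially, expanding downward from a \emph{non}-corner-free presentation produces, by the last sentence of Lemma \ref{lemma:34}, a presentation whose two entries next to the corner are equal, and more generally one can track exactly what the $(0,k)$-coefficients are in terms of $f_i$, $c$, $g_j$. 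The point is that $\Hreg H\le k-1$ forces the $(0,k)$-presentation to be corner-free and to ``come from below,'' i.e. to be the expansion of some $(0,k-1)$-presentation.

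Next I would invoke Corollary \ref{corollary:37a}: a corner-free $(n,k)$-boundary presentation admits an $(n,k-1)$-boundary presentation with explicitly given coefficients, in particular $c^{(k-1)}=f_{k-1}-g_{d-n-1}$ and $g_j^{(k-1)}=g_j-g_{j-1}$ (with $g_0^{(k-1)}=g_0$) and $f_i^{(k-1)}=f_i$ for $i\le k-2$. Therefore, once we are looking at the $(0,k)$-presentation (so $n=0$, $d-n-1=d-1$), the reduction to an $(0,k-1)$-presentation produces coefficients that are automatically nonnegative exactly when $f_{k-1}-g_{d-1}\ge 0$ and $g_{j+1}-g_j\ge 0$ for all relevant $j$; but we also need the corner $c$ of the original $(0,k)$-presentation to vanish, because otherwise Corollary \ref{corollary:37b} forbids obtaining it by expansion from $k-1$. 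Conversely, if $c=0$, $f_{k-1}\ge g_{d-1}$ and $g_{j+1}\ge g_j$, then Corollary \ref{corollary:37a} directly yields a nonnegative $(0,k-1)$-presentation, so $\Hreg H\le k-1$. The remaining bookkeeping is to check that the three stated conditions on the original $(n,k)$-presentation (with general $n$) are equivalent to the corresponding conditions on the derived $(0,k)$-presentation — this is where Lemma \ref{lemma:34}'s downward-expansion formulas are used: expanding downward replaces $f_i$ by partial sums $\sum_{r\le i} f_r$ and appends a new bottom $g$-entry equal to $c+\sum_{r\le k-1} f_r$, while the condition ``$c=0$'' is preserved iff it held originally (an expansion of a corner-free presentation is corner-free and vice versa, via Corollary \ref{corollary:37b}), and monotonicity/comparison of the $g$'s and the relation $f_{k-1}\ge g_{d-n-1}$ transform into $f^{(0)}_{k-1}\ge g^{(0)}_{d-1}$ after one tracks the partial sums.

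The main obstacle I anticipate is the direction $\Hreg H\le k-1 \Rightarrow$ the three conditions: one must argue that \emph{if any} nonnegative $(0,k-1)$-presentation of $H$ exists, then expanding it once to a $(0,k)$-presentation must reproduce the specific $(0,k)$-presentation derived from our $(n,k)$-data (uniqueness, Corollary to Lemma \ref{lemma:32}), that this forces $c^{(0,k)}=0$ (Corollary \ref{corollary:37b}: a non-corner-free presentation is never an expansion from smaller $k$), and then that the explicit coefficients of the unique $(0,k-1)$-presentation — computed by Corollary \ref{corollary:37a} — being nonnegative is equivalent to exactly the three inequalities. The converse direction is essentially immediate from Corollary \ref{corollary:37a} once $c=0$ is assumed, since that corollary's formulas then output a genuine, and under the stated inequalities nonnegative, $(0,k-1)$-boundary presentation of $H$. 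A minor subtlety to handle carefully is translating the stated conditions, which are phrased for an arbitrary $(n,k)$-presentation, down to $n=0$: I would note first that passing from the $(n,k)$- to the $(0,k)$-presentation via Lemma \ref{lemma:34} is reversible and preserves both the corner value (so $c=0$ is equivalent at every level) and — after accounting for the partial-sum transformation of the $f$'s and the appearance of new bottom $g$-entries — the system of inequalities, so without loss of generality one may assume $n=0$ and the proof reduces to the case already analyzed with Corollary \ref{corollary:37a}.
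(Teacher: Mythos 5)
Your strategy for the case $n=0$ --- Corollary \ref{corollary:37a} for ``$\Longleftarrow$'' and one expansion step via Lemma \ref{lemma:34} plus uniqueness for ``$\Longrightarrow$'' --- is essentially the paper's argument and works there. The genuine gap is the ``without loss of generality $n=0$'' step: the three conditions are \emph{not} preserved when you expand the given $(n,k)$-presentation downwards. By Lemma \ref{lemma:34} the $(n-1,k)$-presentation has $f_i^{(n-1)}=\sum_{r\le i}f_r$ and a new bottom entry $g_{d-n}^{(n-1)}=c+\sum_{r=0}^{k-1}f_r$, so the single condition ``$f_{k-1}\ge g_{d-n-1}$'' at level $n$ is replaced, at level $n-1$, by ``$\sum_{r=0}^{k-1}f_r\ge c+\sum_{r=0}^{k-1}f_r$'' (which is just $c=0$ again) together with ``$c+\sum_{r=0}^{k-1}f_r\ge g_{d-n-1}$''. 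Since $\sum_r f_r\ge f_{k-1}$, the level-$(n-1)$ system is strictly weaker than the level-$n$ system, so the two systems are not equivalent and the transport down to $n=0$ fails exactly at the inequality you need.

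This is not a repairable bookkeeping issue: the implication ``$\Longrightarrow$'' is in fact false for $n>0$. Take $d=2$ and $H(t)=(1-t+t^2)/(1-t)^2$. Its $(1,2)$-boundary presentation is $\frac{1}{1-t}+\frac{t^2}{(1-t)^2}$, i.e. $f_0=1$, $f_1=0$, $c=0$, $g_0=1$, all nonnegative; yet $\Hreg H=1=k-1$, witnessed by the nonnegative $(0,1)$-presentation $1+\frac{t}{(1-t)^2}$, while $f_{k-1}=0<1=g_{d-n-1}$. What is true for general $n$ is only that the three conditions are equivalent to the existence of a nonnegative $(n,k-1)$-boundary presentation; this coincides with $\Hreg H\le k-1$ precisely when $n=0$, which is the only case used later in the paper. (The paper's own proof tacitly makes the same assumption: its first line deduces a nonnegative $(n,k-1)$-presentation from $\Hreg H\le k-1$, which Corollary \ref{corollary:42}(a) and Corollary \ref{corollary:37} justify only for $n=0$.) So you should either restrict to $n=0$, where your argument goes through, or reformulate the right-hand side for $n>0$; you cannot obtain the stated general-$n$ equivalence by reducing to the $n=0$ case.
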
 

\begin{proof} ``$\Longrightarrow$'' \ Let $\Hreg H \leq k-1$, then there exists a boundary presentation
\begin{equation}\label{eq:prime}
H(t)=\sum_{i=0}^{k-2} \frac{f'_i t^i}{(1-t)^n} + \frac{c't^{k-1}}{(1-t)^n}+\sum_{j=0}^{d-n-1} \frac{g'_jt^{k-1}}{(1-t)^{d-j}} 
\end{equation}
with nonnegative coefficients. By Lemma \ref{lemma:34}, this presentation can be transformed into 
$$
H(t)=\sum_{i=0}^{k-2} \frac{f_i t^i}{(1-t)^n} +\frac{(c'+\sum_{j=0}^{d-n-1} g'_j) t^{k-1}}{(1-t)^n}+ \sum_{j=0}^{d-n-1} \frac{(\sum_{i=0}^{j}g'_i) t^k}{(1-t)^{d-j}},
$$
and by uniqueness of the $(n,k)$-boundary presentation we have
$$
f_{k-1}= c'+\sum_{j=0}^{d-n-1} g'_j \geq \sum_{j=0}^{d-n-1} g'_j =g_{d-n-1}.
$$
The necessity of the other conditions was already noted in Corollary \ref{corollary:42}~(b) and Proposition \ref{prop:35}.
\medskip

``$\Longleftarrow$''\ If the conditions on the right are satisfied then Corollary \ref{corollary:37a} yields a nonnegative $(0,k-1)$-boundary presentation (\ref{eq:prime}).
\end{proof}

The $(0,\Hreg H)$-boundary presentation can be achieved by iterated reduction steps starting from the $(0,\deg Q)$-boundary presentation.
The reduction continues as long as the conditions of the previous proposition remain valid. Hence it stops in one of the three cases illustrated by the following diagrams

\begin{center}
\begin{tikzpicture}[scale=0.9]
    \draw[dotted] (0,0) grid [step=1cm](4,5);
    \draw[-] (0,0)--(4,0)--(4,5);
    \node [below right] at (1.1,-0.2) {$\scriptstyle \delta_d$};
    \node [below right] at (0.15,0.8) {$a_0$};
    \node [below right] at (1,0.7) {$\ldots$};
    \node [below right] at (1.9,0.8) {$a_{k-1}$};
    \node [below right] at (3.1,0.85) {$\neq 0$};
    \node [below right] at (2.9,1.8) {$b_{d-1}$};
    \node [below right] at (3.25,3) {$\vdots$};
\end{tikzpicture}
  \hspace{1cm}
\begin{tikzpicture}[scale=0.9]
    \draw[dotted] (0,0) grid [step=1cm](4,5);
    \draw[-] (0,0)--(4,0)--(4,5);
    \node [below right] at (1.1,-0.2) {$\scriptstyle \delta_d$};
    \node [below right] at (0.15,0.8) {$a_0$};
    \node [below right] at (1,0.7) {$\ldots$};
    \node [below right] at (1.9,0.8) {$a_{k-1}$};
    \node [below right] at (3.2,0.85) {$0$}; 
    \node [below right] at (2.9,1.8) {$b_{d-1}$};
    \node [below right] at (3.25,3) {$\vdots$};
    \node [ultra thick,rotate=45] at (3,1) {$\bm{<}$};
\end{tikzpicture}
 \hspace{1cm}
\begin{tikzpicture}[scale=0.9]
    \draw[dotted] (0,0) grid [step=1cm](4,5);
    \draw[-] (0,0)--(4,0)--(4,5);

    \node [below right] at (3.1,-0.2) {$\scriptstyle \delta_d$};
    \node [below right] at (0.15,0.8) {$a_0$};
    \node [below right] at (1,0.7) {$\ldots$};
    \node [below right] at (1.9,0.8) {$a_{k-1}$};
    \node [below right] at (3.2,0.85) {$0$};
    \node [below right] at (2.9,1.8) {$b_{d-1}$};
    \node [rotate=90,below right] at (3.25,3.5) {$<$};
    \node [below right] at (3.25,3) {$\vdots$};
    \node [below right] at (2.9,3.8) {$b_{j+1}$};
    \node [below right] at (3.1,4.8) {$b_{j}$};
\end{tikzpicture}

\end{center}

The construction of the $(0, \Hreg H)$-boundary presentation can be described as follows: Starting with $k=\deg Q$
we consider the $(0,k)$-boundary presentation. As long as ${k>\delta_d(\tilde{Q})}$ and $f_{k-1}=g_{d-n-1}^{(k)} $ there is also a nonnegative and corner-free $(0,k-1)$-boundary presentation, so we continue with $k-1$ instead of $k$. As soon as $k=\delta_d (\tilde{Q})$ or $f_{k-1}\neq g_{d-n-1}^{(k)}$ we have reached the minimal $k$ for which a nonnegative and corner-free $(0,k)$-boundary presentation exists. If $k=\delta_d (\tilde{Q})$ or $f_{k-1}< g_{d-n-1}^{(k)}$ no further reduction is possible, hence
$\Hreg H=k$, but if $k>\delta_d(\tilde{Q})$ and $f_{k-1} \geq g_{d-n-1}^{(k)}$ one last reduction step, leading to a non-corner-free boundary presentation, can be performed, so $\Hreg H=k-1$ in this case.

\begin{theorem} \label{theorem:complete}
Let $H(t)=Q(t)/(1-t)^d=\sum_{n \geq 0} a_n t^n$ be a nonnegative series with $d>0$, and let $\tilde{Q}(t)/(1-t)^{j}=\sum_{n \geq 0} b_n^{(j)} t^n$ for $j \in \mathbb{N}$. 
\begin{itemize}
\item[(i)] If $\deg Q \leq d$ or $\delta_d (\tilde{Q}) \geq \deg Q$, then $\Hreg H =\delta_d (\tilde{Q})$.
\item[(ii)] Otherwise, with
$$
k:=\min \{ i \mid  \delta_d (\tilde{Q}) \leq i \leq  \deg Q \  \ \mbox{and} \ \  a_{j}=b_{d-1}^{(j+1)} \ \ \mbox{for~all~} j=i, \ldots , \deg Q\}
$$
we have
$$
\Hreg H=\left \{
\begin{array}{ll}
k&\mbox{if~} k=\delta_d(\tilde{Q}) ~\vee~  a_{k-1}<b_{d-1}^{(k)}\\
k-1&\mbox{if~} k>\delta_d(\tilde{Q}) ~\wedge~  a_{k-1}>b_{d-1}^{(k)}.
\end{array}
\right.
$$
\end{itemize}
\end{theorem}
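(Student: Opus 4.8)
The plan is to treat both cases of Theorem~\ref{theorem:complete} by reducing them to results already established. Case~(i) is a verbatim restatement of Theorem~\ref{theorem:partial}: the hypothesis ``$\deg Q\le d$'' is option~(ii) there, while ``$\delta_d(\tilde Q)\ge\deg Q$'' is option~(i) there (note $\delta_d(\tilde Q)=\delta(\tilde Q)$ holds in both situations, so no extra work is needed). So the first paragraph of the proof simply cites Theorem~\ref{theorem:partial}. The substance is Case~(ii), where $\deg Q>d$ and $\deg Q>\delta_d(\tilde Q)$, and here the strategy is to formalize the informal reduction procedure described in the paragraph immediately preceding the theorem.

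For Case~(ii) I would argue as follows. Start from the $(0,\deg Q)$-boundary presentation of $H$, which is nonnegative because $H$ itself is nonnegative and $\deg Q=\deg H_M$ as a polynomial numerator (all the $f_i$ are genuine coefficients of a nonnegative power series, $c=0$, and there are no $g_j$-terms beyond the honest expansion). Now apply Proposition~\ref{prop:quadrat} repeatedly: at stage with current top exponent $i$, the $(0,i)$-boundary presentation is corner-free (for $i<\deg Q$) with coefficients $f_\ell^{(i)}=a_\ell$ for $\ell\le i-1$ and $g_j^{(i)}=b_j^{(i)}$, by Lemma~\ref{lemma:32}. By Proposition~\ref{prop:38}, $g_{d-1}^{(i)}=b_{d-1}^{(i)}=a_{i-1}^{(d)}=a_{i-1}$ exactly when... wait — more precisely $b_{d-1}^{(i)}=a_{i-1}^{(d)}$, and $a_{i-1}^{(d)}=a_{i-1}$ since $H=Q/(1-t)^d=\sum a_nt^n$. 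The monotonicity conditions $g_{j+1}^{(i)}\ge g_j^{(i)}$ for $j=0,\dots,d-2$ say precisely that the first $d$ coefficients of $\tilde Q(t)/(1-t)^{i-1}$ are nonnegative, i.e.\ $i-1\ge\delta_d(\tilde Q)$, because $g_j^{(i-1)}=g_j^{(i)}-g_{j-1}^{(i)}$ are those coefficients by Corollary~\ref{corollary:37a} and Lemma~\ref{lemma:32}. Hence by Proposition~\ref{prop:quadrat} a reduction from $i$ to $i-1$ is possible iff $i-1\ge\delta_d(\tilde Q)$ (equivalently $i>\delta_d(\tilde Q)$) and $a_{i-1}\ge b_{d-1}^{(i)}$, with the reduction being corner-free exactly when $a_{i-1}=b_{d-1}^{(i)}$.

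From this, the definition of $k$ in the theorem is exactly the first index where the corner-free reductions stop: starting at $\deg Q$ and decreasing, we can perform a corner-free reduction from $i$ to $i-1$ whenever $i>\delta_d(\tilde Q)$ and $a_{i-1}=b_{d-1}^{(i)}$, so the smallest value we reach with a nonnegative \emph{corner-free} $(0,i)$-presentation is $i=k$ with $k$ as defined. At $i=k$ one of two things holds: either $k=\delta_d(\tilde Q)$, or $a_{k-1}\ne b_{d-1}^{(k)}$. If additionally $a_{k-1}<b_{d-1}^{(k)}$ (which can only happen in the second alternative, never when $k=\delta_d$), then by the ``$\Leftarrow$'' direction of Proposition~\ref{prop:quadrat} no reduction to $k-1$ exists, while Corollary~\ref{corollary:42}(b) plus the existence of the corner-free $(0,k)$-presentation pins $\Hreg H=k$; likewise if $k=\delta_d(\tilde Q)$, then $\Hreg H\ge\delta(\tilde Q)\ge\delta_d(\tilde Q)=k$ by Proposition~\ref{prop:neu} and $\Hreg H\le k$ by the corner-free presentation, so again $\Hreg H=k$. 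If instead $k>\delta_d(\tilde Q)$ and $a_{k-1}>b_{d-1}^{(k)}$, then Proposition~\ref{prop:quadrat} gives one last (non-corner-free) reduction to a nonnegative $(0,k-1)$-presentation, so $\Hreg H\le k-1$; and $\Hreg H\ge k-1$ because $k$ was minimal among corner-free presentations and Corollary~\ref{corollary:37b} forbids expanding a non-corner-free $(0,k-1)$-presentation down from any corner-free presentation at a larger exponent — more directly, any $(0,\Hreg H)$-presentation with $\Hreg H\le k-2$ would expand to a corner-free $(0,k-1)$-presentation, contradicting minimality of $k$. This yields $\Hreg H=k-1$.

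The main obstacle I anticipate is bookkeeping rather than conceptual: one must carefully track that the coefficients $f_i^{(i')}$, $c^{(i')}$, $g_j^{(i')}$ appearing after each reduction are the ones named by Lemma~\ref{lemma:32} / Corollary~\ref{corollary:37a}, and in particular that $g_{d-1}^{(i)}=b_{d-1}^{(i)}$ coincides with $a_{i-1}^{(j+1)}$ for $j=d-1$ as in Proposition~\ref{prop:38}, so that the condition $f_{k-1}=g_{d-n-1}^{(k)}$ (with $n=0$) translates to the stated $a_{k-1}=b_{d-1}^{(k)}$. The other delicate point is checking that $\delta_d(\tilde Q)\le\deg Q$ and that the interval over which $k$ ranges is nonempty — but this is guaranteed since the $(0,\deg Q)$-presentation is nonnegative and corner-free (as $\deg Q>d$ forces $c=0$ there) and since $\delta_d(\tilde Q)$ is finite because $\tilde Q(0)=Q(1)>0$; hence the $\min$ defining $k$ is taken over a nonempty set containing $\deg Q$ itself.
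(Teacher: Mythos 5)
Your argument is correct and is essentially the paper's own proof: the paper reduces part (ii) to the iterated-reduction discussion preceding the theorem, which you have simply written out in full via Proposition \ref{prop:quadrat}, Lemma \ref{lemma:32}, Proposition \ref{prop:38} and Corollaries \ref{corollary:37a}, \ref{corollary:37b}, \ref{corollary:42}. The only slightly loose spot is your justification that the starting $(0,\deg Q)$-presentation is nonnegative --- the $g_j=b_j^{(\deg Q)}$ do occur and their nonnegativity comes from $\deg Q\ge\delta_d(\tilde Q)$ (which holds in case (ii)), not from the absence of such terms --- and the parenthetical claim that $a_{k-1}<b_{d-1}^{(k)}$ cannot occur when $k=\delta_d(\tilde Q)$ is unneeded (and unproved), but neither affects the conclusion.
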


\begin{proof}
The cases in (i) were already treated in Theorem \ref{theorem:partial}. 
Part (ii) follows from the discussion preceding this theorem; the number $k$, which is well-defined by Proposition \ref{prop:38}, is just the width of the minimal nonnegative and corner-free boundary presentation.
\end{proof}

The closing result of this section is the analogue of Proposition \ref{prop:neu} for $\delta (Q)$. 

\begin{lemma}
Let $H(t)=Q(t)/(1-t)^d$ be nonnegative and $e:={\max\{\delta_d(\tilde{Q}), \deg{(Q)}+1 \}}$. Then $\delta (Q)=\delta_e(Q)$
\end{lemma}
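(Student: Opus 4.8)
The plan is to show the two inequalities $\delta(Q)\le\delta_e(Q)$ and $\delta(Q)\ge\delta_e(Q)$ separately. The inequality $\delta(Q)\ge\delta_e(Q)$ is immediate from the definitions: if $Q(t)/(1-t)^k$ is nonnegative, then in particular its first $e$ coefficients are nonnegative, so $\delta_e(Q)\le\delta(Q)$; moreover $\delta(Q)$ is finite (by the quoted criterion from \cite{U}, since $Q(1)=\tilde Q(0)>0$ forces $Q$ to be positive near $1$, and $Q$ is nonnegative near $0$ because $H$ is nonnegative), so both quantities are finite and the inequality makes sense. The real content is the reverse: $\delta(Q)\le\delta_e(Q)$, i.e.\ once the first $e$ coefficients of $Q(t)/(1-t)^k$ are nonnegative for $k=\delta_e(Q)$, \emph{all} coefficients are.

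The key idea is to interpret this via boundary presentations of the rational function $\tilde Q(t)/(1-t)^d$, exchanging the roles of $Q$ and $\tilde Q$ relative to the rest of the section. Set $\hat H(t)=\tilde Q(t)/(1-t)^d$; note $\widetilde{\tilde Q}=Q$. Since $\deg\tilde Q=\deg Q$ (as $\tilde Q(0)=Q(1)\ne0$ and the substitution $t\mapsto 1-t$ preserves degree), and since $e\ge\deg Q+1>\deg\tilde Q$, the series $\hat H$ admits an $(n,e)$-boundary presentation for every $n\le d$, obtained by expanding the $(d,\deg\tilde Q)$-presentation via Corollary~\ref{corollary:37}. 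By Lemma~\ref{lemma:32} applied to $\hat H$, the coefficients $g_j$ of its $(0,e)$-boundary presentation are exactly the first $d$ coefficients of $\widetilde{\tilde Q}(t)/(1-t)^e=Q(t)/(1-t)^e$, which are nonnegative by the very definition of $e\ge\delta_d(\tilde Q)$ — wait, here one must be careful: $\delta_d(\tilde Q)$ controls the first $d$ coefficients of $\tilde Q(t)/(1-t)^{\delta_d(\tilde Q)}$, not of $Q(t)/(1-t)^e$. The correct bookkeeping is: the $f_i$ of the $(0,e)$-presentation of $\hat H$ are the first $e$ coefficients of $\tilde Q(t)/(1-t)^d$'s "numerator-side" expansion, and since $e>\deg\tilde Q$ these reduce to the coefficients of the polynomial $\tilde Q(1-t)=Q(t)$ padded by zeros, hence all $f_i\ge0$ iff $Q$ is a nonnegative polynomial — which it is, being the numerator of the nonnegative series $H$. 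Meanwhile the $g_j$ are the first $d$ coefficients of $Q(t)/(1-t)^e$; but $e\ge\delta_d(\tilde Q)=\dots$ — the clean route is instead: apply Theorem~\ref{theorem:complete}(i) to $\hat H=\tilde Q(t)/(1-t)^d$. Its numerator $\tilde Q$ has $\delta_d(\widetilde{\tilde Q})=\delta_d(Q)$, and since $e=\max\{\delta_d(\tilde Q),\deg Q+1\}$ satisfies $e>\deg\tilde Q=\deg Q$, we are in case (i) precisely when $\delta_d(Q)\ge\deg\tilde Q$; if not, we fall into case (ii). So the argument splits along whether $\delta_d(Q)\ge\deg Q$.

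Concretely, I would argue as follows. By Proposition~\ref{prop:neu} applied to the nonnegative series $Q(t)/(1-t)^D$ for $D\gg0$ (legitimate since $Q$ is a nonnegative polynomial, so $Q(t)/(1-t)^D$ is a nonnegative series for every $D$), and since $\Hreg$ of $Q(t)/(1-t)^D$ stabilizes to a value $\le\delta(\widetilde{\cdot})$... Actually the cleanest path: by Proposition~\ref{prop:38} and the remarks after Corollary~\ref{corollary:37}, for $k>\deg Q$ one has $b_j^{(k)}=a_{k-1}^{(j+1)}$ where $b_n^{(k)}$ are the coefficients of $\tilde Q(t)/(1-t)^k$ and $a_n^{(\ell)}$ those of $Q(t)/(1-t)^\ell$. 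Choose $k=e$; then the first $d$ coefficients $b_0^{(e)},\dots,b_{d-1}^{(e)}$ of $\tilde Q(t)/(1-t)^e$ equal $a_{e-1}^{(1)},\dots,a_{e-1}^{(d)}$, the $(e-1)$-th coefficients of $Q(t),Q(t)/(1-t),\dots,Q(t)/(1-t)^d$. Since $e>\deg Q$, we have $a_{e-1}^{(1)}=0$; and since $e\ge\delta_d(\tilde Q)$, these $b_j^{(e)}$ are nonnegative, which says $a_{e-1}^{(j+1)}\ge0$ for $j=0,\dots,d-1$. Now run the reduction/expansion machinery of Corollary~\ref{corollary:37a} on the corner-free $(0,e)$-boundary presentation of $Q(t)/(1-t)^d$ — this exists and is nonnegative because $e\ge\deg Q+1$ and $H$ is nonnegative — to conclude that the $(0,k)$-presentation for all $k\ge e$ stays nonnegative, and then push $d\to\infty$ exactly as in Proposition~\ref{prop:neu} to get $\delta(Q)\le e$. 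Finally, chase the definitions once more: the value produced is $\delta_e(Q)$ by Theorem~\ref{theorem:complete} applied with $\tilde Q$ in place of $Q$, because the hypothesis $e=\max\{\delta_d(\tilde Q),\deg\tilde Q+1\}\ge\deg\tilde Q+1$ lands us in case (i) of that theorem with numerator $\tilde Q$, giving $\Hreg\bigl(\tilde Q(t)/(1-t)^d\bigr)=\delta_d(Q)$, and then the stabilization argument identifies $\delta(Q)$ with $\delta_e(Q)$.

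The main obstacle, and where I would spend the most care, is the index bookkeeping in the last step: keeping straight which of $Q$, $\tilde Q$ plays the "numerator" role and which plays the "$\tilde{\phantom Q}$" role when invoking Theorems~\ref{theorem:partial} and~\ref{theorem:complete} and Proposition~\ref{prop:38}, since the whole section is written with $Q$ as numerator and here we must apply those results with $\tilde Q$ as numerator. I expect the actual inequality chain to be short once the right instance of Theorem~\ref{theorem:partial}(ii)/\ref{theorem:complete}(i) is pinned down — the substance is entirely in verifying that the hypothesis $e=\max\{\delta_d(\tilde Q),\deg Q+1\}$ is exactly what forces case (i) for the function $\tilde Q(t)/(1-t)^d$, so that no "corner correction" ($k-1$ versus $k$) intervenes and $\delta(Q)$ equals $\delta_e(Q)$ on the nose.
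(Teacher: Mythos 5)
Your easy direction is fine: $\delta_e(Q)\le\delta(Q)$ is immediate because the set of $k$ over which $\delta(Q)$ minimizes is contained in the set for $\delta_e(Q)$. But the substantive inequality $\delta(Q)\le\delta_e(Q)$ --- that once the first $e$ coefficients of $Q(t)/(1-t)^{\delta_e(Q)}$ are nonnegative, \emph{all} of them are --- is never actually established. The series $Q(t)/(1-t)^{\delta_e(Q)}$, whose full nonnegativity is exactly what must be proved, does not appear in any of your attempted chains. Your ``cleanest path'' only yields nonnegativity of the $(0,k)$-boundary presentations of $H$ for $k\ge e$; by Lemma \ref{lemma:32} the $f$-coefficients of those are initial coefficients of $Q(t)/(1-t)^{d}=H(t)$, which are nonnegative by hypothesis anyway, so nothing new is gained, and the conclusion you extract (``$\delta(Q)\le e$'') is not the inequality required. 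The other routes are blocked at the outset: Theorem \ref{theorem:complete} and the regularity machinery require their input series to be nonnegative, and $\tilde Q(t)/(1-t)^d$ need not be; moreover the claim $a_{e-1}^{(1)}=0$ misreads Proposition \ref{prop:38}, where $a^{(1)}$ denotes the coefficients of $Q(t)/(1-t)$, not of $Q(t)$, so that $a_{e-1}^{(1)}=Q(1)$.

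The missing idea is to work at the level $n=d-\delta_e(Q)$ rather than $n=0$, so that the $f$-coefficients live at denominator exponent $d-n=\delta_e(Q)$. Since $e>\deg Q$, the $(d-\delta_e(Q),e)$-boundary presentation of $H$ exists, is corner-free, and is nonnegative: by Lemma \ref{lemma:32} its $f_i$ are the first $e$ coefficients of $Q(t)/(1-t)^{\delta_e(Q)}$ (nonnegative by definition of $\delta_e(Q)$), and its $g_j$ are the first $\delta_e(Q)\le d$ coefficients of $\tilde Q(t)/(1-t)^{e}$ (nonnegative because $e\ge\delta_d(\tilde Q)$). Expanding via Corollary \ref{corollary:37} to the $(d-\delta_e(Q),e+m)$-presentation preserves nonnegativity for every $m\ge0$, and by Lemma \ref{lemma:32} the $f$-coefficients of that expansion are precisely the first $e+m$ coefficients of $Q(t)/(1-t)^{\delta_e(Q)}$. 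Letting $m$ grow shows the entire series $Q(t)/(1-t)^{\delta_e(Q)}$ is nonnegative, i.e.\ $\delta(Q)\le\delta_e(Q)$; this one-step argument is what your proposal lacks.
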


\begin{proof}
The $(d-\delta_e(Q),\delta_e(Q))$-boundary presentation of $H$ is nonnegative by Lemma \ref{lemma:32} and the definition of $\delta_d(\tilde{Q})$ and $\delta_e(Q)$. Hence the $(d-\delta_e(Q),\delta_{e+m}(Q))$-boundary presentation with $m\geq 0$ is nonnegative as well, but this implies $\delta_{e+m}(Q)\leq \delta_e(Q)$ for all $m \in \NN$, therefore $\delta (Q)=\delta_e(Q)$.
\end{proof}

\section{Computation of Hilbert depth and Hilbert regularity}\label{sect:alg}

The aim of this section is an algorithm for computing the Hilbert depth and Hilbert regularity of a module with given Hilbert series $H(t)=Q(t)/(1-t)^d$. An algorithm solely for the Hilbert depth was given by A. Popescu in \cite{P}.
\medskip

\begin{algorithm}[H]
\caption{Computing Hilbert depth and Hilbert regularity}\label{algorithm}
\SetAlgoLined
\SetKwInOut{Input}{Input}
\SetKwInOut{Output}{Output}
\textbf{Input}:~$Q \in \ZZ[t], d \in \ZZ$ with $H(t)=Q(t)/(1-t)^d$ nonnegative \\
\nl $\tilde{Q}(t):=Q(1-t)$\;
\BlankLine
\nl - - Determine $\delta_d (\tilde{Q})$: 

$k:=-1$\;
\Repeat{
$b_{0}^{(k)}, \ldots , b_{d-1}^{(k)}$ nonnegative
}{
$k:=k+1$\;
Compute the first $d$ coefficients $b_{0}^{(k)}, \ldots ,b_{d-1}^{(k)}$ of $\tilde{Q}(t)/(1-t)^{k}$\;
}
$\delta_d(\tilde{Q})=k$\;

\BlankLine
\nl - - Determine $\Hproj H$: 

$e:=\max \{\delta_d (\tilde{Q}),\deg (Q) +1\}$\;
$k:=-1$\;
\Repeat{
$a_{0}^{(k)}, \ldots , a_{e-1}^{(k)}$ nonnegative
}{
$k:=k+1$\;
Compute the first $e$ coefficients $a_{0}^{(k)}, \ldots ,a_{e-1}^{(k)}$ of $Q(t)/(1-t)^{k}$\;
}
$\Hproj H=k$\;
\BlankLine
\nl $\Hdepth H=d-\Hproj H$\;
\BlankLine
\nl - - Determine $\Hreg H$:

\eIf{$\deg Q \leq d$ {\bf or} $\delta_d (\tilde{Q}) \geq \deg Q$}{
$\Hreg H=\delta_d (\tilde{Q})$\;
}{
Compute the $i$-th coefficient $a_i$ of $H$ for $i=0, \ldots , \deg Q$\;
Compute the $(d-1)$-th coefficient $b_{d-1}^{(j)}$ of $\displaystyle \frac{\tilde{Q}(t)}{(1-t)^j}$ for $j=\delta_d (\tilde{Q}),  \ldots , \deg Q$\;
$k:=\min \{ i \mid  \delta_d (\tilde{Q}) \leq i \leq  \deg Q   ~\mbox{and}~  a_{j}=b_{d-1}^{(j+1)} ~\mbox{for~all~} j=i, \ldots , \deg Q\}$\;
\eIf{$a_{k-1} \geq b_{d-1}^{(k)}$ {\bf and} $k>\delta_d (\tilde{Q})$}{
$\Hreg H=k-1$\;
}{
$\Hreg H=k$\;
}
}
\Output{$\Hdepth H, \Hreg H$}
\end{algorithm}
\medskip

The correctness of this algorithm follows immediately from the previous results.
The output could be easily extended by the boundary presentations realising $\Hdepth$ or $\Hreg$,
since the required coefficients are computed in the course; for example, a nonnegative boundary presentation 
of the minimal height $\Hdepth H$ is given by
$$
H(t)=\sum_{i=0}^{e-1} \frac{a_{i}^{(h)} t^i}{(1-t)^h} + \left \{
\begin{array}{ll}
\displaystyle \sum_{j=0}^{d-h-1} \frac{a_{e-1}^{(j+1)}t^e}{(1-t)^{d-j}} & \mbox{for~} e=\deg Q >\delta_d (\tilde{Q})\\
&\\
\displaystyle \sum_{j=0}^{d-h-1} \frac{b_{j}^{(\delta_d (\tilde{Q}))}t^e}{(1-t)^{d-j}} & \mbox{for~} e=\delta_d (\tilde{Q}) \geq \deg Q
\end{array}
\right.
$$
with $a$ and $b$ used as in the description of the algorithm, and $h:=\Hproj H$.
\medskip

For completeness we give an upper bound for the number of repetitions of the loop in the second step of Algorithm \ref{algorithm}. The idea is to replace $\tilde{Q}(t)=\sum_i \tilde{q}_i t^i$ with 
a polynomial $\tilde{q}_0+rt$ such that for all $n,i \in \NN$ the coefficient $c_n^{(k)}$ of $(\tilde{q}_0+rt)/(1-t)^k$ is not greater than the coefficient $b_n^{(k)}$ 
of $\tilde{Q}(t)/(1-t)^k$. Such a polynomial can be obtained by repeated application of the map
$$
f=\sum_{i=0}^{m} h_i t^i \longmapsto \sum_{i=0}^{m-2} h_i t^i +\min \{h_{m-1},h_{m-1}+h_m \}t^{m-1}
$$
to the polynomial $\tilde{Q}$. Since
\begin{align*}
\frac{\tilde{q}_0+rt}{(1-t)^k} &=\sum_{n \geq 0} \left [ \tilde{q}_0 {n+k-1 \choose n-1} + r {n+k-2 \choose n-2}   \right ] t^n\\
&= \sum_{n \geq 0} \left [ \frac{\prod_{j=0}^{k-2} (n+j)}{k !} \left (\tilde{q}_0 (n+k-1)+r(n-1) \right) \right ] t^n,
\end{align*}
we want to determine the least $k$ such that
\begin{equation}\label{eq:tilde}
\tilde{q}_0 (n+k-1)+r(n-1)  = (\tilde{q}_0+r)(n-1)+k\tilde{q}_0 \geq 0
\end{equation}
holds for $0 \leq n \leq d-1$. Without loss of generality we may assume $q+r<0$. Then (\ref{eq:tilde}) is equivalent to 
$$
n \leq 1-\frac{\tilde{q}_0 k}{\tilde{q}_0+r}.
$$
This inequality has to be valid in particular for $n=d-1$, and so for
$$
k \geq \frac{(2-d)(\tilde{q}_0 +r)}{\tilde{q}_0}
$$
the first $d$ coefficients of $(\tilde{q}_0+rt)/(1-t)^k$ and a fortiori those of $\tilde{Q}(t)/(1-t)^k$ are nonnegative.

\begin{example}
Let $\displaystyle H(t)=\frac{2-5t+t^2+4t^3}{(1-t)^7}$. Then 
$\tilde{Q}(t)=Q(1-t)=2-9t+13t^2-4t^3$ and we find $\delta_7(\tilde{Q})=7$ since 
\begin{align*}
\frac{\tilde{Q}(t)}{(1-t)^5}&= 2 +t-2t^2-4t^3+0t^4+17t^5+56t^6+\ldots\\
\frac{\tilde{Q}(t)}{(1-t)^6}&= 2 +3t+t^2-3t^3-3t^4+14t^5+70t^6+\ldots\\
\frac{\tilde{Q}(t)}{(1-t)^7}&= 2 +5t+6t^2+3t^3+0t^4+14t^5+84t^6+\ldots
\end{align*}
In order to determine the Hilbert depth we compute the first $\delta_7 (\tilde{Q})=7$ coefficients of $Q(t)/(1-t)^k$ for $k \geq 0$. Since 
\begin{align*}
\frac{Q(t)}{(1-t)^5}&= 2 +5t+6t^2+4t^3+0t^4-3t^5+0t^6+\ldots\\
\frac{Q(t)}{(1-t)^6}&= 2 +7t+13t^2+17t^3+17t^4+14t^5+14t^6+\ldots
\end{align*}
we have $\Hdepth{H}=7-6=1$.
The Hilbert regularity requires no further computations since
$\deg Q=3 <7=d$, and so $\Hreg{H}=\delta_7 (\tilde{Q})=7$; moreover in this case the boundary presentation
\begin{align*}
H(t)&=\displaystyle\frac{2+7t+13t^2+17t^3+17t^4+14t^5+14t^6}{1-t} \\
&+ \displaystyle\frac{14t^7}{(1-t)^2} +\frac{3t^7}{(1-t)^4} +\frac{6t^7}{(1-t)^5} +  \frac{5t^7}{(1-t)^6} +\frac{2t^7}{(1-t)^7}
\end{align*}
simultaneously has the minimal height $\Hdepth H$ and the minimal width $\Hreg H$.
\end{example}

Finally we give two examples illustrating the case $\Hreg H > \delta_d (\tilde{Q})$.

\begin{example}
Let $\displaystyle H(t)=\frac{1-t+t^3}{(1-t)^2}$. Then $\tilde{Q}(t)=1-2t+3t^2-t^3$ and $\delta_2(\tilde{Q})=2$. 
Since $\deg Q$ exceeds $\delta_d (\tilde{Q})$ as well as $d$, the final loop of our algorithm applies.
By
\begin{align*}
\frac{Q(t)}{(1-t)^2}&=1+t+t^2+2t^3+\ldots\\
\frac{\tilde{Q}(t)}{(1-t)^2}&=1+0t+\ldots\\
\frac{\tilde{Q}(t)}{(1-t)^3}&=1+t+\ldots
\end{align*}
we find $k=2=\delta_2(\tilde{Q})$, hence $\Hreg H=2$.
 \medskip

\begin{center}
\begin{tikzpicture}[scale=0.8]
    \draw[] (0,0) grid [step=1cm](4,3);

    \node [below right] at (0.1,-0.2) {$\scriptstyle 0$};
    \node [below right] at (1.1,-0.2) {$\scriptstyle 1$};
    \node [below right] at (2.1,-0.2) {$\scriptstyle 2$};
    \node [below right] at (3.1,-0.2) {$\scriptstyle 3$};
 
    \node [below right] at (-0.9,0.8) {$\scriptstyle 0$};
    \node [below right] at (-0.9,1.8) {$\scriptstyle 1$};
    \node [below right] at (-0.9,2.8) {$\scriptstyle 2$};
         
    \node [below right] at (0.25,2.8) {$1$};
    \node [below right] at (1,2.8) {$-1$};
    \node [below right] at (2.25,2.8) {$0$};
    \node [below right] at (3.25,2.8) {$1$};
    \draw[->] (4.5,1.5)--(5.3,1.5);      
\end{tikzpicture}
   \hspace{0.1cm}
\begin{tikzpicture}[scale=0.8]
   \draw[] (0,0) grid [step=1cm](4,3);
      \node [below right] at (0.1,-0.2) {$\scriptstyle 0$};
      \node [below right] at (1.1,-0.2) {$\scriptstyle 1$};
      \node [below right] at (2.1,-0.2) {$\scriptstyle 2$};
      \node [below right] at (3.1,-0.2) {$\scriptstyle 3$};

      \node [below right] at (-0.9,0.8) {$\scriptstyle 0$};
      \node [below right] at (-0.9,1.8) {$\scriptstyle 1$};
      \node [below right] at (-0.9,2.8) {$\scriptstyle 2$};
         
      \node [below right] at (0.25,0.85) {$1$};
      \node [below right] at (1.25,0.85) {$1$};
      \node [below right] at (2.25,0.85) {$1$};
      \node [below right] at (3.25,1.85) {$1$};
      \node [below right] at (3.25,2.85) {$1$};
      \node [below right] at (3.25,0.85) {$0$};
      \draw[->] (4.5,1.5)--(5.3,1.5);  
\end{tikzpicture}
    \hspace{0.1cm}
\begin{tikzpicture}[scale=0.8]
   \draw[] (0,0) grid [step=1cm](4,3);
   \node [below right] at (0.1,-0.2) {$\scriptstyle 0$};
   \node [below right] at (1.1,-0.2) {$\scriptstyle 1$};
   \node [below right] at (2.1,-0.2) {$\scriptstyle 2$};
   \node [below right] at (3.1,-0.2) {$\scriptstyle 3$};

   \node [below right] at (-0.9,0.8) {$\scriptstyle 0$};
   \node [below right] at (-0.9,1.8) {$\scriptstyle 1$};
   \node [below right] at (-0.9,2.8) {$\scriptstyle 2$};
         
   \node [below right] at (0.25,0.85) {$1$};
   \node [below right] at (1.25,0.85) {$1$};
   \node [below right] at (2.25,0.85) {$0$};
           
   \node [below right] at (2.25,1.85) {$0$};
   \node [below right] at (2.25,2.85) {$1$};
\end{tikzpicture}

\end{center}
\begin{center}
{\small $H(t)=(1-t+t^3)/(1-t)^2$.}
\end{center}
\medskip

This example confirms that $\Hreg H=\delta_d(\tilde{Q})$ may also occur if $\deg Q >d,\delta_d(\tilde{Q})$.
\end{example}

\begin{example}
For $\displaystyle H(t)=\frac{1-t+2t^2-t^3}{(1-t)^2}$ we have $\delta_2(\tilde{Q})=1$, and the calculations can be summarized by
 \medskip

\begin{center}
\begin{tikzpicture}[scale=0.8]
    \draw[] (0,0) grid [step=1cm](4,3);

      \node [below right] at (0.1,-0.2) {$\scriptstyle 0$};
      \node [below right] at (1.1,-0.2) {$\scriptstyle 1$};
      \node [below right] at (2.1,-0.2) {$\scriptstyle 2$};
      \node [below right] at (3.1,-0.2) {$\scriptstyle 3$};
 
      \node [below right] at (-0.9,0.8) {$\scriptstyle 0$};
      \node [below right] at (-0.9,1.8) {$\scriptstyle 1$};
      \node [below right] at (-0.9,2.8) {$\scriptstyle 2$};
         
      \node [below right] at (0.25,2.8) {$1$};
      \node [below right] at (1,2.8) {$-1$};
      \node [below right] at (2.25,2.8) {$2$};
      \node [below right] at (3,2.8) {$-1$};
      \draw[->] (4.5,1.5)--(5.3,1.5);      
\end{tikzpicture}
   \hspace{0.1cm}
\begin{tikzpicture}[scale=0.8]
   \draw[] (0,0) grid [step=1cm](4,3);
   \node [below right] at (0.1,-0.2) {$\scriptstyle 0$};
   \node [below right] at (1.1,-0.2) {$\scriptstyle 1$};
   \node [below right] at (2.1,-0.2) {$\scriptstyle 2$};
   \node [below right] at (3.1,-0.2) {$\scriptstyle 3$};

   \node [below right] at (-0.9,0.8) {$\scriptstyle 0$};
   \node [below right] at (-0.9,1.8) {$\scriptstyle 1$};
   \node [below right] at (-0.9,2.8) {$\scriptstyle 2$};
         
   \node [below right] at (0.25,0.85) {$1$};
   \node [below right] at (1.25,0.85) {$1$};
   \node [below right] at (2.25,0.85) {$3$};
   \node [below right] at (3.25,1.85) {$3$};
   \node [below right] at (3.25,2.85) {$1$};
   \node [below right] at (3.25,0.85) {$0$};
   \draw[->] (4.5,1.5)--(5.3,1.5);  
\end{tikzpicture}
    \hspace{0.1cm}
\begin{tikzpicture}[scale=0.8]
   \draw[] (0,0) grid [step=1cm](4,3);
   \node [below right] at (0.1,-0.2) {$\scriptstyle 0$};
   \node [below right] at (1.1,-0.2) {$\scriptstyle 1$};
   \node [below right] at (2.1,-0.2) {$\scriptstyle 2$};
   \node [below right] at (3.1,-0.2) {$\scriptstyle 3$};

   \node [below right] at (-0.9,0.8) {$\scriptstyle 0$};
   \node [below right] at (-0.9,1.8) {$\scriptstyle 1$};
   \node [below right] at (-0.9,2.8) {$\scriptstyle 2$};
         
   \node [below right] at (0.25,0.85) {$1$};
   \node [below right] at (1.25,0.85) {$1$};
   \node [below right] at (2.25,0.85) {$0$};
           
   \node [below right] at (2.25,1.85) {$2$};
   \node [below right] at (2.25,2.85) {$1$};
   \end{tikzpicture}

\end{center}
\begin{center}
{\small $H(t)=(1-t+2t^2-t^3)/(1-t)^2$.}
\end{center}
\medskip

The third subcase of $\Hreg{H}>\delta_d(\tilde{Q})$, leading to a non-corner-free $(0,\Hreg H)$-boundary presentation, already appeared in
Example \ref{ex:45}.
\end{example}

\end{document}